\documentclass[11pt,a4paper]{amsart}
\usepackage{amsmath,amsthm,amsfonts,amscd,amssymb,enumitem,
latexsym,mathrsfs,mathtools,stmaryrd,thmtools,tikz-cd,bm,verbatim}

\usepackage{hyperref}

\usepackage{cleveref}

\newtheorem{theorem}{Theorem}[section]
\newtheorem{corollary}[theorem]{Corollary}
\newtheorem{lemma}[theorem]{Lemma}
\newtheorem{proposition}[theorem]{Proposition}

\newcounter{theoremintro}

\newtheorem{theoremi}[theoremintro]{Theorem}
\newtheorem{corollaryi}[theoremintro]{Corollary}

\theoremstyle{definition}
\newtheorem{definition}[theorem]{Definition}
\newtheorem{remark}[theorem]{Remark}

\newtheorem{example}[theorem]{Example}

\newtheorem*{definition*}{Definition}

\numberwithin{equation}{section}

\allowdisplaybreaks

\newcommand{\mc}[1]{\mathcal{#1}}
\newcommand{\bb}[1]{\mathbb{#1}}

\def\acts{\curvearrowright}

\def\supp{\mathrm{supp}}

\allowdisplaybreaks
\newcommand\blfootnote[1]{%
  \begingroup
  \renewcommand\thefootnote{}\footnote{#1}%
  \addtocounter{footnote}{-1}%
  \endgroup
}
\title{Diagonal dimension and intermediate sub-\texorpdfstring{$\mathrm{C}^\ast$}{C*}-algebras}

\author{Grigoris Kopsacheilis}
\address{Grigoris Kopsacheilis, KU Leuven Department of Mathematics, Celestijnenlaan 200B box 2400, BE-3001 Leuven, Belgium.}
\email{gkopsach@kuleuven.be}
\urladdr{https://sites.google.com/view/gkopsach}

\hypersetup{
  pdftitle={Diagonal Dimension and Intermediate Subalgebras},
  pdfauthor={Grigoris Kopsacheilis}
}

\begin{document}
\begin{abstract}

We show that finiteness of diagonal dimension, in the sense of Li--Liao--Winter, passes to intermediate sub-$\mathrm{C}^\ast$-algebras. More specifically, we obtain an upper bound for the diagonal dimension of the pair of an intermediate sub-C*-algebra in a $\mathrm{C}^\ast$-diagonal together with the diagonal, which we estimate in terms of the diagonal dimension of the ambient pair and the covering dimension of the diagonal's spectrum. This generalizes a theorem of Archbold and Kumjian stating that intermediate sub-$\mathrm{C}^\ast$-algebras of AF-diagonals are AF $\mathrm{C}^\ast$-algebras.
\end{abstract}
\maketitle

\blfootnote{\noindent Funded by the European Union.\ Views and opinions expressed are however those of the author only and do not necessarily reflect those of the European Union or the European Research Council. Neither the EU nor the ERC can be held responsible for them.}

\section*{Introduction}

\noindent Dimension theories are omnipresent across all subjects of mathematics, with various notions of dimension recording different types of data associated to each class of objects. For topological spaces, among the most prominent notions of dimension is that of \emph{covering dimension}: a space has covering dimension at most $d\in\mathbb{N}$ when every finite open cover can be refined to one such that at most $d+1$ of its sets can simultaneously intersect \cite{Engelking78}. With the view of $\mathrm{C}^\ast$-algebras as noncommutative topology via Gelfand duality, the notion of covering dimension was extended to the noncommutative setting with the seminal notions of \emph{nuclear dimension} and \emph{decomposition rank} introduced by Winter and his collaborators \cite{KirWin04, WinZac10} (cf.\ \cite{Win02}). Roughly speaking, just as covering dimension of a space measures its complexity through the least number of overlapping open sets in a refining cover, the nuclear dimension of a $\mathrm{C}^\ast$-algebra measures the least number of finite-dimensional summands needed to approximate the identity map with a sum of completely positive contractive (c.p.c.), orthogonality-preserving (i.e.\ \emph{order zero}) maps -- see \cite[Definition~1.5]{Win18} for the precise definition.

Along with $\mathcal{Z}$-stability (i.e.\ tensorial absorption of the Jiang--Su algebra $\mc{Z}$), nuclear dimension plays a key role in Elliott's classification programme: unital, simple, separable, nuclear, UCT $\mathrm{C}^\ast$-algebras with finite nuclear dimension fall within the scope of the celebrated Classification Theorem \cite[Theorem~5.3]{Whi23}. With such a powerful result at hand, establishing finiteness of the nuclear dimension of various $\mathrm{C}^\ast$-algebras is an imperative task to ensure that they are \emph{classifiable}, i.e.\ they lie in the domain of the Classification Theorem. Some notions of dimension whose theory was developed with one of the main goals being to obtain upper bounds for the nuclear dimension of associated $\mathrm{C}^\ast$-algebras include those of \emph{Rokhlin dimension} \cite{HirshbergWinterZacharias15, Sza15, SimSzaWil20}, (\emph{fine}) \emph{tower dimension} \cite{Ker20}, and  \emph{dynamic asymptotic dimension} \cite{GueWilYu17, Bonicke24}.

After X.\ Li's breakthrough \cite{Li20} on Cartan subalgebras (in the sense of Renault \cite{Ren08}) and $\mathrm{C}^\ast$-diagonals (in the sense of Kumjian \cite{Kum86}; see \Cref{def:diag}) in classifiable $\mathrm{C}^\ast$-algebras, K.\ Li, H.\ Liao and W.\ Winter introduced the \emph{diagonal dimension of sub-$\mathrm{C}^\ast$-algebras} \cite{LiLiaWin23}, a definition akin to that of the nuclear dimension of the ambient $\mathrm{C}^\ast$-algebra that further aims to keep track of the positioning of the sub-$\mathrm{C}^\ast$-algebra -- see \Cref{def:dimdiag}.  In the non-degenerate case, the diagonal dimension of a $\mathrm{C}^\ast$-pair encodes information about the normalizers and the (partial) homeomorphisms that these induce on the spectrum of the abelian sub-$\mathrm{C}^\ast$-algebra: when it is finite, the sub-$\mathrm{C}^\ast$-algebra is a diagonal, and for  diagonals arising from dynamics or groupoids, its value is tightly linked respectively to the fine tower dimension or the dynamic asymptotic dimension of the underlying object (cf.\ \cite[Theorems~B, E]{LiLiaWin23}), thus making it a prominent definition for a potential (classification and) regularity theory for $\mathrm{C}^\ast$-diagonal pairs (cf.\ \cite[Problem~XLVIII]{SchTikWhi25}).

Nuclear dimension enjoys many permanence properties (e.g.\ it is non-increasing with respect to hereditary sub-$\mathrm{C}^\ast$-algebras), but it is not monotone with respect to arbitrary sub-$\mathrm{C}^\ast$-algebras. This is rather natural since a $\mathrm{C}^\ast$-algebra can contain much wilder sub-$\mathrm{C}^\ast$-algebras, with Kirchberg's $\mc{O}_2$-embedding and $M_{2^\infty}$-subquotient theorems \cite[Theorem~IV.3.4.18]{Bla06} being prime examples of this phenomenon -- in particular, an arbitrary sub-$\mathrm{C}^\ast$-algebra of a nuclear $\mathrm{C}^\ast$-algebra need not be nuclear. Nevertheless, Archbold and Kumjian showed in \cite{ArcKum86} that some well-positioned sub-$\mathrm{C}^\ast$-algebras retain nuclearity: more precisely, they proved that sub-$\mathrm{C}^\ast$-algebras of uniformly hyperfinite (UHF) algebras that contain an \emph{approximately finite-dimensional} (\emph{AF}) \emph{diagonal}, are not only nuclear, but even AF $\mathrm{C}^\ast$-algebras (see also \cite{Archbold99}). Recall that a diagonal $(D\subset A)$ is an \emph{AF-diagonal} when it is the inductive limit of a system of masas $(D_{F_i}\subset F_i)$ in finite-dimensional $\mathrm{C}^\ast$-algebras with normalizer-preserving connecting maps. Every AF algebra admits an AF-diagonal \cite{StrVoi75}, and conceptually understanding when a sub-$\mathrm{C}^\ast$-algebra is an AF-diagonal is a problem of current interest \cite[Problem~XLVII]{SchTikWhi25}.

 Just as nuclear dimension zero is equivalent to the $\mathrm{C}^\ast$-algebra being an AF algebra, diagonal dimension zero of a (non-degenerate) sub-$\mathrm{C}^\ast$-algebra is equivalent to the pair being an AF-diagonal \cite[Theorem~4.1]{LiLiaWin23}. This allows us to interpret the theorem of Archbold and Kumjian as a statement for dimension zero, which brings us to the main result of this paper. In what follows we use the notation $\dim^{+1}(.)$ as shorthand for $\dim(.)+1$.

\begin{theoremi}\label{thmi:A}
Let $(D\subset A)$ be a $\mathrm{C}^\ast$-diagonal with $D$ separable. If $D\subset B\subset A$ is an intermediate sub-$\mathrm{C}^\ast$-algebra, then
\[
\dim_{\mathrm{diag}}^{+1}(D\subset B)\le \dim^{+1}(\widehat{D})\cdot\dim^{+1}_\mathrm{diag}(D\subset A),
\]
where $\widehat{D}$ is the Gelfand spectrum of $D$. In particular, if $D$ has totally disconnected spectrum, then 
\[
\dim_\mathrm{diag}(D\subset B)\le \dim_\mathrm{diag}(D\subset A).
\]
\end{theoremi}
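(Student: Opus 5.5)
Write $n=\dim_{\mathrm{diag}}(D\subset A)$ and $d=\dim(\widehat D)$. The plan is to take a diagonal-dimension approximation of $\mathrm{id}_A$ and compress it into $B$ using the conditional expectation. Recall that a $\mathrm{C}^\ast$-diagonal comes with a unique faithful conditional expectation $E\colon A\to D$. For an intermediate algebra $D\subset B\subset A$, the key structural input is a Kumjian/Archbold--Kumjian type description: $B$ is the closed span of the normalizers $n\in N_A(D)$ that lie in $B$. Equivalently, via the twisted groupoid model $(G,\Sigma)$ of $(D\subset A)$ in Renault's sense, $B$ corresponds to an open subgroupoid $H\subseteq G$ containing the unit space. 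So $(D\subset B)$ is again a $\mathrm{C}^\ast$-diagonal, and there is a "restriction" map $A\to B$ which, on functions on $G$, multiplies by the characteristic function of $H$.

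This restriction is not continuous unless $H$ is clopen, and that is exactly where $\dim\widehat D$ enters. For a finite tolerance, I would fix finite-dimensional approximations $\psi_i\colon A\to F_i$ and $\varphi_i\colon F_i\to A$, $i=0,\dots,n$, as in \Cref{def:dimdiag}. Here each $\varphi_i$ sends matrix units of the masa $D_{F_i}$ and its normalizers to normalizers of $D$. The image of the matrix units under $\varphi_i$ consists of finitely many normalizers $v$ with supports of the form $s(v)\subset\widehat D$. Whether $v$ "belongs to $B$" is a local question: it is decided by the open set of points $x\in s(v)$ at which the germ of $v$ lies in $H$.

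To make the cut-off continuous, I would refine the open cover of $\widehat D$ given by these supports and their $H$/non-$H$ parts. Using $\dim\widehat D\le d$ and a partition of unity subordinate to a refinement, I would split it into $d+1$ families $U^{(0)},\dots,U^{(d)}$, each of which consists of pairwise disjoint open sets. Multiplying each $\varphi_i$ on the right and on the left by the corresponding bump functions $h^{(k)}\in D$ produces $(n+1)(d+1)$ order-zero maps. Each of these sends matrix units to normalizers which, on each disjoint piece, either lie entirely in $B$ or are discarded. The disjointness within a colour is what keeps the pieces order zero: the compressed map splits as a direct sum over the disjoint pieces, indexed by a finite-dimensional algebra $F_i\otimes\mathbb C^{m}$ with the natural masa. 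The pieces are also normalizer-preserving, because a product of a normalizer with an element of $D$ is a normalizer. Composing the $\psi$-side with the inclusion $B\hookrightarrow A$ and then compressing the same way gives maps $B\to F_i\otimes\mathbb C^m$. The resulting sum approximates $\mathrm{id}_B$ on a finite set because, for $b\in B$, the parts of $\varphi_i\psi_i(b)$ supported off $H$ are small; this follows by applying $E(\,\cdot\, v^\ast)$ with normalizers $v\notin B$ and using that such coefficients vanish for $b\in B$. This yields the bound $\dim^{+1}(\widehat D)\cdot\dim^{+1}_{\mathrm{diag}}(D\subset A)$. When $\widehat D$ is totally disconnected, $d=0$, and the cut-offs can be taken to be projections in $D$, giving the "in particular" statement.

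I expect the main obstacle to be the approximation estimate. Cutting $\varphi_i\psi_i(b)$ along a partition of unity by elements of $D$, and dropping the non-$H$ parts, must change it only by a small amount. The cleanest route is to first perturb, using separability of $D$ (this is where that hypothesis is used), so that the normalizers $\varphi_i(e_{jk})$ are supported on open bisections small enough that each is either nearly inside $H$ or nearly outside it. One then needs to check that the overlap of at most $d+1$ colours costs no more than a bounded multiple of the tolerance, which can be absorbed into the original tolerance.
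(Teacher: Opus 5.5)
Your overall architecture matches the paper's count: split each of the $n+1$ colours of an approximation of $A$ into $d+1$ further colours using a partition of unity of $\widehat D$. However, the step that is supposed to produce order zero maps fails as stated.

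Compressing $\varphi_i$ on both sides by a bump $h\in D$ gives an order zero map only when $h$ commutes with $\varphi_i(F_i)$. Disjointness of the pieces within a colour does not help with this. Already for $A=C(X)\otimes M_2$, $D=C(X)\otimes D_2$, $\varphi(x)=1\otimes x$ and $h=\mathrm{diag}(h_1,h_2)$, take the orthogonal projections $p=\frac12\left[\begin{smallmatrix}1&1\\1&1\end{smallmatrix}\right]$ and $1-p$. Then $hph\cdot h(1-p)h\neq0$ wherever $h_1\neq h_2$.

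Spectrally, the partial homeomorphism induced by $\varphi_i(e_{jk})$ carries the support of $\varphi_i(e_{kk})$ onto the support of $\varphi_i(e_{jj})$, and these copies can be far apart. So bumps with small supports, which is what a refinement of a cover of $\widehat D$ gives you, either destroy order zero or annihilate the off-diagonal terms ($h\varphi_i(e_{jk})h=0$). In the latter case $\sum_W h_W\varphi_i\psi_i(b)h_W$ is no longer close to $\varphi_i\psi_i(b)$.

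What you need is a $(d+1)$-colourable partition of unity of all of $\widehat D$ whose members are of two kinds:
\begin{itemize}
\item synchronised along the copies, i.e.\ $h(\iota_r(z))=\tilde h(z)$ for every $r$, with $\tilde h$ of small support;
\item or of small support themselves.
\end{itemize}
This is not automatic, because the copies are open but in general neither closed nor covering $\widehat D$. The paper obtains it in \Cref{prop:POU}: it passes to the quotient of $\widehat D$ that identifies the copies over a compact region and shows this quotient still has covering dimension at most $d$.

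There are two further gaps.

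\emph{Landing in $B$.} ``Either lie entirely in $B$ or are discarded'' cannot be implemented with domain $F_i\otimes\mathbb{C}^m$. Dropping those $\varphi_i(e_{jk})$ whose germs leave $H$ over a piece destroys complete positivity and order zero. The paper uses the coordinate description of \Cref{lem:ideal-matrix-subhom}: the $(j,k)$-entries of $B\cap J$ lie in $C_0(V_{jk})$, with $V_{jk}\cap V_{kl}\subset V_{jl}$. Over each synchronised bump $\tilde h_s$ it replaces $M_r$ by $\bigoplus_Q p_QM_rp_Q$, where $Q$ runs over the classes of the equivalence relation $j\sim_s k\iff\supp(\tilde h_s)\subset V_{jk}$. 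The rest of $\widehat D$ is handled by maps $\bb{C}\to D$.

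\emph{The estimate.} Your estimate is not justified. A single $\varphi_i\psi_i(b)$ need not be near $B$, since only the sum over $i$ approximates $b$. Also, on a piece straddling the boundary of $H$, the coefficients you discard need not be small. The paper proceeds as follows.
\begin{itemize}
\item It uses \cite[Proposition~2.3]{LiLiaWin23} to get $\varphi^{(i)}(x^{(i)}_b)\approx d_ib\in B$ with $d_i\in D$.
\item It applies the approximate conditional expectation $\Psi_i$ of \Cref{prop:cond-exps}, built from the Donsig--Pitts fact $\Phi(bv)v^*\in B$. This yields an element lying \emph{exactly} in $B\cap\mathrm{C}^\ast(D,\varphi^{(i)}(F^{(i)}))$.
\item Only then does it cut down to compact sets $K_{jk}\subset V_{jk}$ and take $\delta$ smaller than their distance to the complement of $V_{jk}$.
\end{itemize}
This is \Cref{thm:subhom}, which supplies $\dim\widehat D+1$ maps for each colour of $A$. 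Separability of $D$ is used there for metrisability of $\widehat D$, not to perturb normalisers.
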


When $\dim_{\mathrm{diag}}(D\subset A)=0$, and thus also $\dim\widehat{D}=0$, \Cref{thmi:A} recovers (a strengthening of) the result of Archbold and Kumjian. Note that the dimension can drop when considering an intermediate sub-$\mathrm{C}^\ast$-algebra: for example, the $\mathrm{C}^\ast$-pair $(C(X)\subset C(X)\rtimes\bb{Z})$ has diagonal dimension $1$ for any Cantor minimal system $\bb{Z}\acts X$, whereas the pair $(C(X)\subset C(X))$ has dimension $0$. A somewhat more interesting example also occurs by considering Putnam's \emph{orbit breaking subalgebra} $C(X)\subset A_{\{y\}}\subset C(X)\rtimes\bb{Z}$ for some $y\in X$ (cf.\ \cite{Putnam89, DeeleyPutnamStrung24}). The pair $(C(X)\subset A_{\{y\}})$ also has dimension $0$. Beyond totally disconnected spectra, it remains unclear whether the diagonal dimension can increase when passing to an intermediate sub-$\mathrm{C}^\ast$-algebra -- we discuss this in \Cref{rmk:optimal}. 

The following corollary of \Cref{thmi:A} shows that many intermediate sub-$\mathrm{C}^\ast$-algebras of pairs with finite diagonal dimension fall within the scope of the Classification Theorem.

\begin{corollaryi}\label{cori:B}
Let $(D\subset A)$ be a unital $\mathrm{C}^\ast$-diagonal with $A$ separable. If $\dim_\mathrm{diag}(D\subset A)<\infty $, then any  $\mathrm{C}^\ast$-algebra $B$ with $D\subset B\subset A$ has finite nuclear dimension. If moreover $B$ is simple, it is classifiable.
\end{corollaryi}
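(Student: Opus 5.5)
The plan is to derive Corollary~\ref{cori:B} from Theorem~\ref{thmi:A} together with the results of \cite{LiLiaWin23} that compare diagonal dimension with nuclear dimension.

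First, since $A$ is separable, its sub-$\mathrm{C}^\ast$-algebra $D$ is separable as well, so Theorem~\ref{thmi:A} applies. Next we need $\dim \widehat{D}<\infty$. For this I would invoke the comparison in \cite{LiLiaWin23}: for a diagonal, the diagonal dimension bounds the nuclear dimension of the ambient algebra and also the covering dimension of the spectrum of the diagonal. Roughly, compressing the approximating maps to $D$ (using conditional expectations onto the finite-dimensional masas) gives $\dim_{\mathrm{nuc}}(D)\le\dim_{\mathrm{diag}}(D\subset A)$. Since $D$ is unital, $\widehat{D}$ is compact metrizable, so $\dim\widehat{D}=\dim_{\mathrm{nuc}}(D)\le \dim_{\mathrm{diag}}(D\subset A)<\infty$. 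Theorem~\ref{thmi:A} then gives
\[
\dim^{+1}_{\mathrm{diag}}(D\subset B)\le \dim^{+1}(\widehat{D})\cdot \dim^{+1}_{\mathrm{diag}}(D\subset A)\le \bigl(\dim^{+1}_{\mathrm{diag}}(D\subset A)\bigr)^2<\infty .
\]

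Finally, diagonal dimension dominates nuclear dimension, since the definition of $\dim_{\mathrm{diag}}$ is the definition of $\dim_{\mathrm{nuc}}$ with extra constraints. Hence $\dim_{\mathrm{nuc}}(B)\le \dim_{\mathrm{diag}}(D\subset B)<\infty$.

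For classifiability, assume $B$ is simple. Then $B$ is unital, because it contains the unit of $A$, which lies in $D$. It is separable as a subalgebra of $A$, and nuclear because its nuclear dimension is finite. For the UCT, note that $D\subset B$ is a $\mathrm{C}^\ast$-diagonal: finite diagonal dimension of the pair forces $D$ to be a diagonal (as recalled in the introduction), and alternatively $D$ is a masa in $B$ with the restricted conditional expectation. Renault's theorem then realizes $(D\subset B)$ as a twisted groupoid $\mathrm{C}^\ast$-algebra of an amenable étale groupoid, and Tu's theorem gives the UCT. Therefore $B$ is classifiable by \cite[Theorem~5.3]{Whi23}.

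The main point to check is the bound on $\dim\widehat{D}$ and the passage from ``diagonal'' to ``UCT''. Both should be available from \cite{LiLiaWin23}, which explicitly notes that finite diagonal dimension implies the UCT via the groupoid picture. Failing that, one can restrict the diagonal-dimension approximations of $A$ to $D$ directly.
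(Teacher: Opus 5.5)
Your proposal is correct and follows the paper's proof. \Cref{thmi:A}, made finite by $\dim\widehat{D}=\dim_{\mathrm{nuc}}(D)\le\dim_{\mathrm{diag}}(D\subset A)$, together with $\dim_{\mathrm{nuc}}(B)\le\dim_{\mathrm{diag}}(D\subset B)$ gives finite nuclear dimension; the UCT then follows because $B$ is separable and nuclear with Cartan subalgebra $D$, which the paper simply cites as Barlak--Li \cite{BarLi17}. In unpacking that citation, note two points: amenability of the Weyl groupoid is not part of Renault's theorem but is deduced from nuclearity of $B$; and your ``alternative'' (masa plus restricted expectation) would still need regularity of $D$ in $B$, so rely on the finite-diagonal-dimension argument.
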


Our proof of \Cref{thmi:A} is based on the strategy of Archbold--Kumjian developed in \cite{ArcKum86}. In their context, starting from an AF-diagonal in a UHF algebra -- say $(D\subset A)=\varinjlim_k(\bigotimes_{i=1}^kD_{r_i}\subset \bigotimes_{i=1}^kM_{r_i})$ -- and an intermediate sub-$\mathrm{C}^\ast$-algebra $B$, they first obtain a sequence of conditional expectations $\Psi_k\colon A\to C_k\coloneqq \bigotimes_{i=1}^kM_{r_i}\otimes \bigotimes_{i>k}D_{r_i}$ with $\Psi_k(B) = B\cap C_k$ and $\Psi_k\to\mathrm{id}_A$ in the point-norm topology. Second, they prove that for a totally disconnected compact space $X$ and $r\in\bb{N}$, any sub-$\mathrm{C}^\ast$-algebra $D_r\otimes C(X) \subset B_0\subset M_r\otimes C(X)$ is AF, and thus each $\Psi_k(B)$ is AF, so $B=\overline{\bigcup_k\Psi_k(B)}$ is AF. 

In our generality, the first obstacle is that we lack an inductive limit structure for a diagonal $(D\subset A)$ for which we only know its dimension is at most $d\in\bb{N}$; nevertheless we do have $d+1$ normalizer-preserving c.p.c.\ order zero maps $\varphi^{(i)}\colon F^{(i)}\to A$ in our approximation of the identity, and so the role of the auxiliary algebras $\bigotimes_{i=1}^kM_{r_i}\otimes\bigotimes_{i>k}D_{r_i}$ is played simultaneously by the $d+1$ $\mathrm{C}^\ast$-algebras $\mathrm{C}^\ast(D,\varphi^{(i)}(F^{(i)}))$. For intermediate sub-$\mathrm{C}^\ast$-algebras therein, we are able to find (sufficiently) \emph{approximate} conditional expectations to play the role of the $\Psi_k$'s -- see \Cref{prop:cond-exps}. It is crucial to ensure that our replacements of the $\Psi_k$'s map intermediate subalgebras within themselves (which was automatic for AF-diagonals in UHF algebras), which we achieve using a result of Donsig--Pitts \cite[Proposition~3.10]{DonsigPitts08}. The next difficulty lies in the approximations needed. Similar to the case of an intermediate $D_r\otimes C(X)\subset B_0\subset M_r\otimes C(X)$, we show that (a large part of) an intermediate subalgebra $D\subset B\subset \mathrm{C}^\ast(D,\varphi^{(i)}(F^{(i)}))$ has, in a sense, a \emph{coordinate system} that we can employ (\Cref{cor:ideal-subhom}). At this point it becomes clear that $(D\subset B)$ is also a diagonal, however calculating the exact value of its dimension requires some more work. The following result is the analogue of the second step in the approach of Archbold--Kumjian.

\begin{theoremi}\label{thmi:C}
Let $(D\subset A)$ be a $\mathrm{C}^\ast$-diagonal with $D$ separable, let $(D_F\subset F)$ be a masa in a finite-dimensional $\mathrm{C}^\ast$-algebra, and let  $\varphi\colon F\to A$ be a linear c.p.c.\ order zero map with $\varphi(\mc{N}_F(D_F))\subset \mc{N}_A(D)$. Then, any $\mathrm{C}^\ast$-algebra $B$ with $D\subset B\subset\mathrm{C}^\ast(D,\varphi(F))$ is subhomogeneous and 
\[
\dim_\mathrm{diag}(D\subset B)=\dim\widehat{D}.
\]
\end{theoremi}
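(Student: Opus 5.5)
We have to prove two things about $B$: that it is subhomogeneous, and that $\dim_\mathrm{diag}(D\subset B)=\dim\widehat{D}$. The plan is to describe $\mathrm{C}^\ast(D,\varphi(F))$ concretely as a bundle of matrix algebras over open subsets of $\widehat{D}$, and then show that an intermediate $B$ is described fibrewise by equivalence relations on matrix units.

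\emph{Step 1: structure of $\mathrm{C}^\ast(D,\varphi(F))$.} Write $F=\bigoplus_j M_{r_j}$ with $D_F$ the diagonal matrices, and let $e^{(j)}_{kl}$ be the matrix units; these are normalizers of $D_F$. Put $h=\varphi(1_F)$ and $v^{(j)}_{kl}=\varphi(e^{(j)}_{kl})$. By hypothesis each $v^{(j)}_{kl}$ is a normalizer of $D$.
\begin{itemize}
\item Since $\varphi$ is order zero, the $v^{(j)}_{kk}$ are mutually orthogonal positive elements of $D$. Their supports $U^{(j)}_k\subset\widehat{D}$ are open.
\item The normalizer $v^{(j)}_{kl}$ induces a partial homeomorphism $\alpha^{(j)}_{kl}\colon U^{(j)}_l\to U^{(j)}_k$.
\item Using the structure theorem for order zero maps, $\varphi=h\pi(\cdot)$ with $\pi$ a supporting $^\ast$-homomorphism, and Kumjian's characterisation of diagonals (e.g.\ via the twisted groupoid picture), I would show that
\[
\mathrm{C}^\ast(D,\varphi(F))=D+\overline{\operatorname{span}}\{D\,v^{(j)}_{kl}\,D\}.
\]
The off-diagonal part lives over the open set $U=\bigcup_{j,k}U^{(j)}_k$. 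Over $U$ the algebra is an $M_{r_j}$-bundle: locally it is $C_0(V)\otimes M_{r_j}$, where $V$ is a fundamental domain such as $U^{(j)}_1$, and on $\widehat{D}\setminus U$ it coincides with $D$.
\end{itemize}
It follows that $\mathrm{C}^\ast(D,\varphi(F))$ is subhomogeneous of degree at most $\max_j r_j$, and so is every sub-$\mathrm{C}^\ast$-algebra $B$. This settles the first claim.

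\emph{Step 2: coordinates for intermediate $B$.} Because $D\subset B$, the subalgebra $B$ is a $D$-bimodule. I would apply Donsig--Pitts \cite[Proposition~3.10]{DonsigPitts08}, or a direct argument in the spirit of Archbold--Kumjian, to obtain
\[
B=\overline{\operatorname{span}}\{\,f\,v^{(j)}_{kl} : f\in D,\ f\,v^{(j)}_{kl}\in B\,\}.
\]
The key point is that $B$ is closed under the conditional expectation $E\colon A\to D$ composed with multiplication by normalizers. So $B$ is determined by the open sets $W^{(j)}_{kl}\subset U^{(j)}_k$ on which the $(k,l)$ matrix-unit coordinate is allowed.

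Closure of $B$ under multiplication and adjoints forces, at each point $x\in V$, the set $\{(k,l):x\in W^{(j)}_{kl}\}$ to be an equivalence relation $R_x$ on $\{1,\dots,r_j\}$. The map $x\mapsto R_x$ is lower semicontinuous, because the sets $W^{(j)}_{kl}$ are open. Hence the fibres of $B$ are block-diagonal algebras $\bigoplus M_{|c|}$ over the classes $c$ of $R_x$, and $D\subset B$ is a diagonal whose normalizers include the $f\,v^{(j)}_{kl}$ with $\supp f\subset W^{(j)}_{kl}$.

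\emph{Step 3: computing the dimension.}
\begin{itemize}
\item \emph{Lower bound.} $\dim_\mathrm{diag}(D\subset B)\ge\dim\widehat{D}$ holds for any diagonal pair. I would cite or reprove this from \cite{LiLiaWin23}: compressing the approximations by $E$ yields a decomposition of $\mathrm{id}_D$ into order zero maps.
\item \emph{Upper bound.} Set $d=\dim\widehat{D}$. Since $\widehat{D}$ is second countable, the function $x\mapsto R_x$ takes finitely many values, and the sets $\{x:R_x\supseteq R\}$ are open. The plan is:
  \begin{enumerate}
  \item Choose an open cover of $\widehat{D}$ of multiplicity $d+1$, refining both the stratification by $R_x$ and the domains of the $\alpha^{(j)}_{kl}$, and $\alpha$-invariant in the appropriate sense.
  \item Split the cover into $d+1$ colour classes, each consisting of disjoint sets.
  \item Over each set $O$ with constant minimal relation $R$, build normalizer-preserving order zero maps from $\bigoplus_c M_{|c|}\otimes D_{O}$, using a partition of unity and the matrix units $v^{(j)}_{kl}$ rescaled by $h^{-1}$ functional calculus on the support.
  \end{enumerate}
\end{itemize}

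\emph{Main obstacle.} The hardest step is making this upper bound precise. The relation $R_x$ can jump as $x$ varies: it is larger on open sets and smaller on their boundaries. The local matrix pieces must therefore be tapered so that they do not use matrix units outside $B$, while still giving a multiplicity-$(d+1)$ approximation. The scalar factor $h$ also degenerates at the boundary of $U$.

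I would first handle the stratification by induction on the finite poset of relations. One colour's worth of covers of each stratum would be absorbed into the $d+1$ colours via the standard "refine a cover subordinate to nested open sets with multiplicity $d+1$" argument for covering dimension. The boundary of $U$, where $h\to0$, would be treated by the cut-down $(h-\varepsilon)_+$.
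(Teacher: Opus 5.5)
Your Steps 1--2, the subhomogeneity argument and the lower bound match the paper. There the ideal $J=\langle\varphi(1_F)\rangle\cong\bigoplus_n C_0(U^{(n)})\otimes M_{r_n}$ has abelian quotient, and $B\cap J$ is described by open sets $V^{(n)}_{ij}$ that are reflexive and symmetric and satisfy $V_{ij}\cap V_{jk}\subset V_{ik}$ (\Cref{lem:ideal-matrix-subhom}, \Cref{cor:ideal-subhom}). The gap is the upper bound, which is the real content of the theorem and which you only postulate. Item~(1) asks for an open cover of multiplicity $d+1$ that is ``$\alpha$-invariant in the appropriate sense'', but producing it is the hard part. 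The order zero maps on the $J$-part have the form $x\mapsto f\otimes x$ with $f\in C_c(U^{(n)})$. This is the \emph{same} function placed simultaneously on all $r_n$ pairwise disjoint open copies $U^{(n)}\times\{r\}$ of $U^{(n)}$ in $\widehat{D}$. These synchronized functions must then be completed by functions on the rest of $\widehat{D}$ to one partition of unity using only $d+1$ colours. Colouring a fundamental domain and its complement separately costs $2(d+1)$ colours, which only gives $\dim_{\mathrm{diag}}\le 2d+1$. Saturating an ordinary multiplicity-$(d+1)$ cover under the partial homeomorphisms can multiply its multiplicity by $r_n$. The paper resolves this in \Cref{prop:POU}. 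It identifies the copies over a compact neighbourhood $\overline{V_2^{(n)}}$ to get a compact metrizable quotient $Y=X/{\sim}$. It shows $\dim Y\le d$ by covering $Y$ with finitely many closed sets $q(C_{\bar r})$ on which the quotient map is injective. It then applies Ostrand's theorem on $Y$ and pulls back, splitting the non-synchronized members into disjoint pieces of small diameter. Nothing in your proposal plays this role.

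Your treatment of the jumps of $R_x$ is unworkable as stated, and also unnecessary. An open cover cannot refine the stratification $\{x: R_x=R\}$, since the strata are only locally closed, and the induction over the poset of relations is never specified. The paper never stratifies:
\begin{enumerate}
\item It cuts the $J$-component of $b$ by a central $z\in\bigoplus_n C_c(U^{(n)})\otimes 1_{r_n}$, using $\Psi$ from \Cref{prop:cond-exps} and \Cref{lem:central-contraction} to keep everything contractive. The entries $b^{(n)}_{ij}$ are then supported in compact sets $K^{(n)}_{ij}\subset V^{(n)}_{ij}$.
\item It chooses $\delta$ so that the $\delta$-neighbourhood of each $K^{(n)}_{ij}$ lies in $V^{(n)}_{ij}$.
\item For each synchronized $\tilde h_s^{(n)}$ of diameter less than $\delta$, it uses the single relation $i\sim_s j\iff\supp(\tilde h_s^{(n)})\subset V^{(n)}_{ij}$. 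This is an equivalence relation by the transitivity condition on the $V$'s.
\end{enumerate}
The maps $x\mapsto\tilde h_s^{(n)}\otimes x$ on $\bigoplus_Q p_QM_{r_n}p_Q$ then land in $B$. Every entry with $i\not\sim_s j$ vanishes on $\supp(\tilde h_s^{(n)})$, so compressing to the blocks loses nothing. Incidentally, $x\mapsto R_x$ takes finitely many values simply because a finite set carries finitely many relations; second countability plays no role.
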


The above calculation is also the reason why the factor $\dim^{+1}\widehat{D}$ appears in the estimate in \Cref{thmi:A}. There is some technical effort needed for the proof of \Cref{thmi:C} which we now explain, and to isolate the core ideas let us assume that $D$ is unital and $F=M_r$ is a matrix algebra. Assume for a second that $\mathrm{C}^\ast(D,\varphi(M_r))\cong M_r\otimes C(X)$ for some $X\subset \widehat{D}$ with $D\cong D_r\otimes C(X)$. An intermediate subalgebra $D_r\otimes C(X) \subset B\subset M_r\otimes C(X)$ is described as $r\times r$ matrices over $C(X)$ for which there are open subsets $V_{ij}$ of $X$ such that only functions supported within $V_{ij}$ are allowed to appear as the $(i,j)$ entry of a matrix contained in $B$ (cf.\ \Cref{lem:ideal-matrix-subhom}). When $B= M_r\otimes C(X)$ all the sets $V_{ij}$ are equal to $X$, and in order to show $\dim_{\mathrm{diag}}(D\subset B)$ is equal to $\dim\widehat{D}=\dim X\eqqcolon d$ one can simply use a good enough $(d+1)$-colored partition of unity $\{h_s: s\in \bigsqcup_{i=0}^dS_i\}$ for $X$ to c.p.c.\ order zero maps from each $\bb{C}^{S_i}$ to $C(X)$, and amplify those with $M_r$. However one cannot merely use that argument for an arbitrary intermediate $B$, since the ranges of these maps can easily fail to lie within $B$, which is crucial. To remedy this, we replace the ``too large" amplifying algebras $M_r$ as follows: for each $s\in S_i$, instead of adding a full copy of $M_r$, we use a direct sum of corners of $M_r$ with respect to diagonal projections corresponding to the equivalence classes of the relation $i\sim j\iff\supp(h_s)\subset V_{ij}$. In this way, we ensure that the ranges in our approximation never leave the allowed coordinates that define $B$.

The final problem is that in our generality the order zero map $\varphi$ is not necessarily unital. In the unital case $\varphi$ is automatically a $^*$-homomorphism and we are essentially in the situation described in the above paragraph, so the spectrum of $D$ decomposes as the disjoint union of $r$ disjoint clopen copies of $X$. Without this assumption, we only have $r$ disjoint open copies of $X$ in $\widehat{D}$ with their union having nonempty complement in $\widehat{D}$, and so we cannot simply start with a suitable $(d+1)$-colored partition of unity for $X$ and replicate it $r$ times across its open copies, as this process does not produce a ($(d+1)$-colored) partition of unity for $\widehat{D}$; we thus need to find a good enough $(d+1)$-colored partition of unity \emph{for the entire space $\widehat{D}$}, that (in a sense) behaves \emph{in the same way} along the $r$ disjoint copies of $X$, and use this to define our order zero maps. We handle this in \Cref{prop:POU}.

Intermediate sub-$\mathrm{C}^\ast$-algebras of $\mathrm{C}^\ast$-diagonals were also studied in \cite{BrownExelFullerPittsReznikoff21, BrownExelFullerPittsReznikoff21corrigendum} (cf.\ \cite{Komura22}), where it was shown that in a nuclear $\mathrm{C}^\ast$-algebra, pairing the diagonal with an intermediate sub-$\mathrm{C}^\ast$-algebra gives again a $\mathrm{C}^\ast$-diagonal. Also, the theorem of Archbold--Kumjian was recently applied in \cite{KopWin26} to give the first examples of Cantor spectrum diagonals in $M_{2^\infty}$ that are not AF-diagonals. It is possible that \Cref{thmi:A} can also serve in more general contexts as a means of establishing non-conjugacy of diagonals with homeomorphic spectra in various $\mathrm{C}^\ast$-algebras by studying their intermediate sub-$\mathrm{C}^\ast$-algebras.

\subsection*{Acknowledgments}
I am grateful to Wilhelm Winter for fruitful discussions about the topic that were vital to this project.

The author has been supported by the European Research Council under the European Union's Horizon Europe research and innovation programme (ERC grant AMEN-101124789).

\section{Preliminaries}
\noindent For a $\mathrm{C}^\ast$-algebra $A$, we write $A^1$ for its closed unit ball and $A_+$ for the cone of positive elements. We write $A_+^1\coloneqq A_+\cap A^1$. For $a,b\in A$ we say that $a,b$ are \emph{orthogonal} when $ab=ba=0$ and write $a\perp b$. A completely positive (c.p.) linear map between $\mathrm{C}^\ast$-algebras is said to be \emph{order zero} when it preserves orthogonality. When $\mc{F}$ is a finite subset of $A$, we write $\mc{F}\Subset A$. The closed, two-sided ideal in $A$ generated by a set $S\subset A$ is denoted by $\langle S\rangle\trianglelefteq A$. A sub-$\mathrm{C}^\ast$-algebra $B\subset A$ is called \emph{non-degenerate} when $B$ contains an approximate unit for $A$. $\mc{M}(A)$ denotes the multiplier algebra of $A$.

\begin{definition}{(cf.\ \cite{Kum86}).}\label{def:diag}
A \emph{$\mathrm{C}^\ast$-diagonal} is a sub-$\mathrm{C}^\ast$-algebra $(D\subset A)$ such that
\begin{enumerate}[label=(\roman*)]
\item\label{it:superf} $(D\subset A)$ is non-degenerate,
\item  $D$ is a maximal abelian self-adjoint subalgebra (\emph{masa}) of $A$ with the \emph{unique extension property}, i.e.\ every pure state on $D$ extends uniquely to a (pure) state on $A$,
\item $D$ is \emph{regular} in $A$, i.e.\ the set of \emph{normalizers} 
\[
\mc{N}_A(D)\coloneqq \{v\in A: vDv^*\cup v^*Dv\subset D\}
\] 
generates $A$ as a $\mathrm{C}^\ast$-algebra,
\item $D$ is the image of a faithful conditional expectation $A\to D$.
\end{enumerate}
\end{definition}
Condition \ref{it:superf} in the preceding definition is in fact superfluous \cite{Pit21}. It is a consequence of the spectral theorem that a masa $(D_F\subset F)$ in a finite-dimensional $\mathrm{C}^\ast$-algebra is automatically a $\mathrm{C}^\ast$-diagonal, and any two masas in $F$ are conjugate, that is, there is an automorphism of $F$ mapping one masa onto the other \cite[Example~1.5]{LiLiaWin23}.

For $r\in\bb{N}$ we write $(D_r\subset M_r)$ for the $\mathrm{C}^\ast$-diagonal of $r\times r$ diagonal matrices in $r\times r$ matrices. For a finite-dimensional $\mathrm{C}^\ast$-algebra $F=\bigoplus_{n=1}^NM_{r_n}$, since any masa $D_F\subset F$ is conjugate to $\bigoplus_{n=1}^ND_{r_n}$, we have that $F$ admits a system of matrix units \emph{with respect to $D_F$}, i.e.\ a set of elements $\{e_{ij}^{(n)}:1\le i,j\le r_n, 1\le n\le N\}$ that satisfies the usual matrix-unit multiplication and adjoint relations, that spans $F$, and the pairwise-orthogonal projections $\{e_{ii}^{(n)}: 1\le i\le r_n, 1\le n\le N\}$ generate $D_F$.

\begin{definition}\label{def:dimdiag}(cf.\ \cite[Definition~A]{LiLiaWin23}).
Let $(D\subset A)$ be a sub-$\mathrm{C}^\ast$-algebra and let $d\in\bb{N}$. We say that $(D\subset A)$ has \emph{diagonal dimension at most} $d$ and write $\dim_\mathrm{diag}(D\subset A)\le d$ when, for any $\varepsilon>0$ and $\mc{F}\Subset A$ there is a finite-dimensional $\mathrm{C}^\ast$-algebra $F=\bigoplus_{i=0}^dF^{(i)}$ together with a masa $D_F=\bigoplus_{i=0}^dD_{F^{(i)}}$ and linear c.p.\ maps $\psi\colon A\to F$ and $\varphi\colon F\to A$ such that
\begin{enumerate}[label=(\roman*)]
\item $\|\varphi\psi(a)-a\|<\varepsilon$ for all $a\in\mc{F}$,
\item $\psi$ is completely positive contractive (c.p.c.),
\item $\psi(D)\subset D_F$,
\item for each $i=0,\dots,d$ the map $\varphi^{(i)}\coloneqq\varphi\vert_{F^{(i)}}$ is c.p.c.\ order zero, and \emph{normalizer-preserving}, i.e.\ $\varphi^{(i)}(\mc{N}_{F^{(i)}}(D_{F^{(i)}}))\subset \mc{N}_A(D)$.
\end{enumerate}
The \emph{diagonal dimension} of $(D\subset A)$ is the least $d\in\bb{N}\cup\{\infty\}$ for which $\dim_\mathrm{diag}(D\subset A)\le d$, and we denote it by $\dim_\mathrm{diag}(D\subset A)$.
\end{definition}

Recall that if $\varphi\colon A\to B$ is a positive linear map between $\mathrm{C}^\ast$-algebras and $(D_A\subset A)$, $(D_B\subset B)$ are abelian sub-$\mathrm{C}^\ast$-algebras such that $\varphi(\mc{N}_A(D_A))\subset \mc{N}_B(D_B)$, then $\varphi(D_A)\subset D_B$ \cite[Proposition~1.9]{LiLiaWin23}.

For a locally compact Hausdorff space $X$, we write $\dim X$ for its covering dimension \cite[Definition~1.6.7]{Engelking78}. For $f\in C_0(X)$, the \emph{support} of $f$ is the open set $\supp(f)\coloneqq f^{-1}(\bb{C}\setminus\{0\})$, and we refer to its closure as the \emph{closed support} of $f$. We write $C_c(X)$ for the (not necessarily closed) two-sided ideal of those $f\in C_0(X)$ whose closed support is compact. When $U\subset X$ is open, we view $C_0(U)$ as an ideal in $C_0(X)$ by identifying it with those $f\in C_0(X)$ such that $\supp(f)\subset U$.

For $d\in\bb{N}$, we say that a family of functions $\{h_s\}_{s\in S}\subset C_0(X)$ is $(d+1)$\emph{-colorable} when there is a map $c\colon S\to \{0,\dots,d\}$ such that, if $s_1,s_2\in S$ are distinct with $c(s_1)=c(s_2)$, then $h_{s_1}\perp h_{s_2}$. We call such a map a \emph{coloring map} for $\{h_s\}_{s\in S}$. For $K\subset X$, we say that $\{h_s\}_{s\in S}\subset C_0(X)_+^1$ is a \emph{partition of unity for} $K$ when $S$ is finite and $\sum_{s\in S}h_s(x)=1$ for all $x\in K$. Recall that when $K$ is compact, for any finite open cover $\mc{U}$ of $K$ there is a partition of unity $\{h_U\}_{U\in\mc{U}}\subset C_c(X)_+^1$ for $K$ \emph{subordinate to} $\mc{U}$, meaning that the closed support of each $h_U$ is contained in $U$. In particular, by Ostrand's theorem \cite[Theorem~3.2.4]{Engelking78} when $X$ is a locally compact metric space with $\dim X\le d$, for any $\delta>0$ and any compact $K\subset X$ there is a $(d+1)$-colorable partition of unity $\{h_s\}_{s\in S}\subset C_c(X)_+^1$ such that the closed support of each $h_s$ has diameter less than $\delta$.

\section{Approximate conditional expectations preserving intermediate subalgebras}

\noindent In this section, starting from a $\mathrm{C}^\ast$-diagonal $(D\subset A)$ and a linear c.p.c.\ order zero map $\varphi\colon F\to A$ with $F$ a finite-dimensional $\mathrm{C}^\ast$-algebra such that there is a masa $(D_F\subset F)$ for which $\varphi$ is normalizer-preserving, we will show that there exists an ``approximate" conditional expectation $A\to\mathrm{C}^\ast(D,\varphi(F))$ that preserves intermediate subalgebras (cf.\ \Cref{prop:cond-exps}).

\begin{lemma}\label{lem:orthogonal-range-support-0exp}
Let $(D\subset A)$ be an abelian sub-$\mathrm{C}^\ast$-algebra and let $\Phi\colon A\to D$ be a conditional expectation. If $v\in A$ is such that $vv^*, v^*v\in D$ and $vv^*\perp v^*v$, then $\Phi(v)=0$.
\end{lemma}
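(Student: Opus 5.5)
We will show $\Phi(v)=0$ by pushing $v$ through both ``supports'' and using that $vv^*$ and $v^*v$ live on disjoint parts of $D$. First we reduce to a factorisation. By polar-decomposition type arguments in a $\mathrm{C}^\ast$-algebra, $v=\lim_n (vv^*)^{1/n}v$; more precisely $\|(vv^*)^{1/n}v-v\|\to0$. Indeed, with $f_n(t)=t^{1/n}$ we get
\[
\|f_n(vv^*)v-v\|^2=\|v^*(1-f_n(vv^*))^2v\|=\|g_n(v^*v)\|,
\]
where $g_n(t)=t(1-t^{1/(2n)})^2$ (computed in the unitization). The functions $g_n$ converge to $0$ uniformly on $[0,\|v\|^2]$, so the claim holds. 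Similarly $v=\lim_n v\,(v^*v)^{1/n}$. Combining the two,
\[
v=\lim_n (vv^*)^{1/n}\,v\,(v^*v)^{1/n}.
\]
Set $a_n\coloneqq (vv^*)^{1/n}$ and $b_n\coloneqq (v^*v)^{1/n}$. Both lie in $D$, because $D$ is a $\mathrm{C}^\ast$-algebra containing $vv^*$ and $v^*v$ and functional calculus with $f_n(0)=0$ stays in $D$ without needing a unit.

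Next we use the bimodule property of the conditional expectation. Since $\Phi$ is a conditional expectation onto $D$, it is $D$-bilinear, so
\[
\Phi(a_nvb_n)=a_n\Phi(v)b_n.
\]
Because $D$ is abelian, this equals $\Phi(v)\,a_nb_n$. We also have $vv^*\perp v^*v$ in the commutative algebra $D$, so their supports in $\widehat{D}$ are disjoint. Hence $a_nb_n=f_n(vv^*)f_n(v^*v)=0$, since the functional calculus preserves these supports. Therefore $\Phi(a_nvb_n)=0$ for all $n$.

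Finally, $\Phi$ is contractive, so continuity gives $\Phi(v)=\lim_n\Phi(a_nvb_n)=0$.

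The only point requiring care is the approximation $v=\lim (vv^*)^{1/n}v$ in the possibly non-unital setting. This is handled by the norm computation above, carried out in the unitization, where $1-f_n(vv^*)$ makes sense.
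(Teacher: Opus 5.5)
Your proof is correct and takes essentially the paper's route: you approximate $v$ by $(vv^*)^{1/n}v$ and $v(v^*v)^{1/n}$, pull the $D$-factors out of $\Phi$ via the bimodule (multiplicative-domain) property, commute them past $\Phi(v)$, and use $\mathrm{C}^\ast(vv^*)\perp\mathrm{C}^\ast(v^*v)$, merely compressing the paper's two steps (first reducing to $\Phi(v)v^*v=0$, then proving it) into the single approximation $v=\lim_n (vv^*)^{1/n}v(v^*v)^{1/n}$. One harmless slip in your norm computation: the function should be $g_n(t)=t(1-t^{1/n})^2$, not $t(1-t^{1/(2n)})^2$, and it still converges uniformly to $0$ on $[0,\|v\|^2]$.
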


\begin{proof}
Note that $v=\lim_nv(v^*v)^\frac{1}{n}=\lim_n (vv^*)^\frac{1}{n}v$. For each $n\ge1$ let $\{p_{k,n}\}_{k\in\bb{N}}$ be a sequence of polynomials on $\bb{C}$ with zero constant term that converge uniformly over $\sigma(v^*v)\cup\{0\}=\sigma(vv^*)\cup\{0\}$ to the map $z\mapsto z^\frac{1}{n}$ (which exists by Weierstra\ss 's approximation theorem). Using that $D$ lies in the multiplicative domain of $\Phi$, we have that
\begin{align*}
\Phi(v)&=\lim_n\Phi(v(v^*v)^\frac{1}{n})\\
&=\lim_n\Phi(v)\cdot (v^*v)^\frac{1}{n}\\
&=\lim_n\lim_k\Phi(v)\cdot p_{k,n}(v^*v)
\end{align*}
so it suffices to show that $\Phi(v)v^*v=0$. But indeed,
\begin{align*}
\Phi(v)v^*v&=\lim_n\Phi((vv^*)^\frac{1}{n}v)v^*v\\
&=\lim_n(vv^*)^\frac{1}{n}\Phi(v)v^*v\\
&=\lim_n(vv^*)^\frac{1}{n}\cdot v^*v\cdot \Phi(v)\\
&=0
\end{align*}
where in the last step we used that $v^*v\perp vv^*$.
\end{proof}

\begin{lemma}\label{lem:dense-subspace}
Let $(D\subset A)$ be a non-degenerate sub-$\mathrm{C}^\ast$-algebra with $D$ abelian, let $(D_F\subset F)$ be a masa in a finite-dimensional $\mathrm{C}^\ast$-algebra, and let $\varphi\colon F\to A$ be a c.p.c.\ order zero map such that $\varphi(\mc{N}_F(D_F))\subset\mc{N}_A(D)$. Let $\{e_{i,j}^{(n)}\}_{i,j,n}$ be a system of matrix units of $F$ relative to $D_F$. Then
\[
D+\sum_{i,j,n}D\cdot \varphi(e_{ij}^{(n)})
\]
is a dense linear subspace of $\mathrm{C}^\ast(D,\varphi(F))$.
\end{lemma}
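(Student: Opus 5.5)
Set $V\coloneqq D+\sum_{i,j,n}D\cdot\varphi(e_{ij}^{(n)})$. The plan is to show that $\overline{V}$ is a $\mathrm{C}^\ast$-subalgebra of $A$ containing $D$ and $\varphi(F)$. It is clearly a linear subspace, it contains $D$, and it contains $\varphi(F)$ because the $e_{ij}^{(n)}$ span $F$. It also lies inside $\mathrm{C}^\ast(D,\varphi(F))$. So it remains to check that $\overline V$ is closed under adjoints and under multiplication; it then equals $\mathrm{C}^\ast(D,\varphi(F))$.

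The key tool is the structure of order zero maps. Write $h\coloneqq\varphi(1_F)\in\mc{M}$-sense, i.e.\ use the structure theorem to get a $^*$-homomorphism $\pi\colon F\to\mc{M}(\mathrm{C}^\ast(\varphi(F)))$ with $\varphi(x)=h\pi(x)=\pi(x)h$. Put $v_{ij}^{(n)}\coloneqq\varphi(e_{ij}^{(n)})$. Each matrix unit is a normalizer of $D_F$, so each $v_{ij}^{(n)}$ is a normalizer of $D$. By \cite[Proposition~1.9]{LiLiaWin23}, $\varphi(D_F)\subset D$, so $v_{ii}^{(n)}\in D$.

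\textbf{Adjoints.} For $d\in D$ we have $(d\,v_{ij}^{(n)})^*=v_{ji}^{(n)}d^*$, so we need $v\,d\in\overline{D v}$ for a normalizer $v$. This follows from the standard identity $v\,d=\lim_k (vdv^*)\,p_k(vv^*)\,v$, where $p_k$ are polynomials without constant term approximating $t\mapsto 1/t$ on the spectrum away from $0$. More concretely, $v d=\lim_k v (v^*v)^{1/k}d$, and for a normalizer the element $v\,g(v^*v)d$ equals $\theta(g\,d)\,v$ for a suitable $\theta(gd)\in D$ (e.g.\ $v\,p(v^*v)d = v\,p(v^*v)d\,\cdots$). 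Then $(v^*v)^{1/k}d$ is approximated by $v^*v\cdot q(v^*v)d$, and $v\,v^*v\,c=(vcv^*)v$ with $vcv^*\in D$ for $c\in D$. Hence $v\,d\in\overline{Dv}$, and therefore $V^*\subset\overline V$.

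\textbf{Products.} For $d,d'\in D$ we have $d v_{ij}^{(n)}\,d'v_{kl}^{(m)}\in \overline{D v_{ij}^{(n)}v_{kl}^{(m)}}$ by the previous step. So it suffices to show that $v_{ij}^{(n)}v_{kl}^{(m)}\in\overline V$.
\begin{itemize}
\item If $(j,n)\ne(k,m)$, then $e_{ij}^{(n)}\perp e_{kl}^{(m)}$, so the order zero property gives $v_{ij}^{(n)}v_{kl}^{(m)}=0$.
\item If $(j,n)=(k,m)$, the structure theorem gives $v_{ij}^{(n)}v_{jl}^{(n)}=h^2\pi(e_{il}^{(n)})=\varphi(1)\varphi(e_{il}^{(n)})$. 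Since $\varphi(1)$ need not lie in $D$, we rewrite this as $\varphi(e_{ii}^{(n)})\varphi(e_{il}^{(n)})=v_{ii}^{(n)}v_{il}^{(n)}$. This holds because $\pi(e_{ii})\pi(e_{il})=\pi(e_{il})$ and $h$ commutes with $\pi(F)$. Since $v_{ii}^{(n)}\in D$, the product lies in $D\cdot v_{il}^{(n)}\subset V$.
\end{itemize}
Products with elements of $D$ are handled by the same commutation step.

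The main obstacle is the commutation step $v\,D\subset\overline{Dv}$ for normalizers $v$, since $D$ need not be unital and $v$ is not a partial isometry. I would handle it via $v=\lim v(v^*v)^{1/k}$, polynomial approximation without constant term as in \Cref{lem:orthogonal-range-support-0exp}, and the identity $v\,(v^*v)c=(vcv^*)\,v$ for $c\in D$. Here $vcv^*\in D$ because $v$ is a normalizer, and $v^*v$ commutes with $c$ because $v^*v\in D$ and $D$ is abelian. Combining the adjoint and product steps, $\overline V$ is a $\mathrm{C}^\ast$-algebra, and hence $V$ is dense in $\mathrm{C}^\ast(D,\varphi(F))$.
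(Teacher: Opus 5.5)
Your proposal follows the paper's route: show that $\overline{V}$ is closed under adjoints and products, using $vD\subset\overline{Dv}$ for normalizers $v$, obtained from $v=\lim_k v(v^*v)^{1/k}$ and polynomials without constant term as in \Cref{lem:orthogonal-range-support-0exp}. Your explicit computation $\varphi(e_{ij}^{(n)})\varphi(e_{kl}^{(m)})=h^2\pi(e_{ij}^{(n)}e_{kl}^{(m)})$, which is either $0$ or $\varphi(e_{ii}^{(n)})\varphi(e_{il}^{(n)})\in D\cdot\varphi(e_{il}^{(n)})$, usefully spells out what the paper compresses into ``$S\cdot S\subset\overline{S}$''.

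One step is not justified as written. The inclusion $\varphi(F)\subset\overline{V}$ does not follow from the $e_{ij}^{(n)}$ spanning $F$. The space $V$ contains only the products $d\,\varphi(e_{ij}^{(n)})$ with $d\in D$, and $D$ need not be unital. This is exactly where the paper uses the non-degeneracy hypothesis, which you never invoke: if $(u_\lambda)\subset D$ is an approximate unit for $A$, then $\varphi(e_{ij}^{(n)})=\lim_\lambda u_\lambda\varphi(e_{ij}^{(n)})\in\overline{V}$. Alternatively, in your notation, $\varphi(e_{ii}^{(n)})^{1/k}\varphi(e_{ij}^{(n)})=h^{1+1/k}\pi(e_{ij}^{(n)})\to\varphi(e_{ij}^{(n)})$, with $\varphi(e_{ii}^{(n)})^{1/k}\in D$.

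Three smaller corrections:
\begin{enumerate}
\item For $(j,n)\ne(k,m)$ it is not true in general that $e_{ij}^{(n)}\perp e_{kl}^{(m)}$; take $e_{12}$ and $e_{31}$. You only need the one-sided relation $e_{ij}^{(n)}e_{kl}^{(m)}=0$ together with $\varphi(x)\varphi(y)=h^2\pi(xy)$.
\item The ``standard identity'' using polynomials that approximate $t\mapsto 1/t$ should be discarded, since no such uniform approximation exists near $0$. Keep your ``more concretely'' argument instead, which is correct.
\item In that argument, $v^*v\in D$ holds because $v^*v=\varphi(e_{jj}^{(n)})^2$, or by non-degeneracy.
\end{enumerate}
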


\begin{proof}
Set $S\coloneqq D+\sum_{i,j,n}D\cdot \varphi(e_{ij}^{(n)})$. Clearly $S\subset \mathrm{C}^\ast(D,\varphi(F))$. Since $S$ contains $D$ and since $D$ contains an approximate unit for $A$, by the definition of $S$ we have that $\varphi(F)\subset \overline{S}$, so it suffices to show that $\overline{S}$ is closed under adjoints and multiplication. Arguing as in the beginning of the proof of \Cref{lem:orthogonal-range-support-0exp}, we see that if $v\in\mc{N}_A(D)$, then $vD\subset\overline{Dv}$ and $Dv\subset\overline{vD}$. Using this observation together with the fact that each $\varphi(e_{i,j}^{(n)})$ is a normalizer of $D$, it follows that $S\cdot S\subset \overline{S}$ and that $S^*\subset\overline{S}$, and thus $\overline{S}$ is a $\mathrm{C}^\ast$-algebra.
\end{proof}

\begin{proposition}\label{prop:cond-exps}
Let $(D\subset A)$ be a $\mathrm{C}^\ast$-diagonal, let $(D_F\subset F)$ be a masa in a finite-dimensional $\mathrm{C}^\ast$-algebra, and let $\varphi\colon F\to A$ be a c.p.c.\ order zero map with $\varphi(\mc{N}_F(D_F))\subset\mc{N}_A(D)$. 

Then, for any $\mathcal{F}\Subset\mathrm{C}^\ast(D,\varphi(F))$ and any $\eta>0$ there is a faithful c.p.c.\ linear map $\Psi\colon A\to \mathrm{C}^\ast(D,\varphi(F))$ such that 
\begin{enumerate}[label=\normalfont{(\roman*)}]
\item\label{item:condexp1} $\Psi(B)\subset B$ for all $\mathrm{C}^\ast$-algebras $D\subset B\subset A$,
\item\label{item:condexp2} $\Psi\vert_D=\mathrm{id}_D$, and
\item\label{item:condexp3} $\|\Psi(a)-a\|<\eta$ for all $a\in\mathcal{F}$.
\end{enumerate}

\end{proposition}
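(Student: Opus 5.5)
The plan is to build $\Psi$ as a ``matrix-unit compression'' of the faithful conditional expectation $E\colon A\to D$, using only normalizers that come from $\varphi$. Fix matrix units $\{e_{ij}^{(n)}\}$ of $F$ relative to $D_F$. Write $\varphi(x)=h\pi(x)$, the structure theorem for order zero maps, with $h=\varphi(1_F)\in\mc{M}(\mathrm{C}^\ast(\varphi(F)))$ and $\pi$ a supporting $^*$-homomorphism.

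For a continuous $g\colon[0,1]\to[0,1]$ vanishing near $0$, set $w_{ij}^{(n)}\coloneqq g(\varphi)(e_{ij}^{(n)})=g(h)\pi(e_{ij}^{(n)})$. These elements are again normalizers of $D$ lying in $\mathrm{C}^\ast(D,\varphi(F))$, since functional calculus of normalizer-preserving order zero maps is normalizer-preserving, or by direct use of polar decompositions as in \Cref{lem:orthogonal-range-support-0exp}. Moreover $\sum_{i,n}w_{ii}^{(n)}=g(h)\pi(1)$, which is a positive element of $D$ of norm at most one. I would then define
\[
\Psi(a)\coloneqq \sum_{n}\sum_{i,j,k,l} w_{ik}^{(n)}\,E\bigl(w_{ki}^{(n)}\,a\,w_{jl}^{(n)}\bigr)\,w_{lj}^{(n)}\;+\;c\,E(a)\,c,
\]
with $c\coloneqq (1-\sum_{i,n}(w_{ii}^{(n)})^2)^{1/2}\in D$, possibly with the indices adjusted so that it reads as a Schur-type compression.

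For each $n$, the first sum has the form $V^*(\mathrm{id}_{M_{r_n}}\otimes E)(WaW^*)V$ for suitable row and column matrices of the $w$'s. Hence it is completely positive, and $\Psi$ is c.p. Since $E$ is faithful and the $c\,E(\cdot)\,c$ term together with the diagonal terms covers everything, faithfulness should follow. To get genuine faithfulness in the nonunital case, I may replace $\Psi$ by $(1-\delta)\Psi+\delta E$ for tiny $\delta$.

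The key steps, in order, are as follows.
\begin{enumerate}[label=(\arabic*)]
\item Check $\Psi\vert_D=\mathrm{id}_D$. For $d\in D$, $w_{ki}dw_{jl}$ is a normalizer-type element whose source and range supports are orthogonal unless $i=j$ and $k=l$, so \Cref{lem:orthogonal-range-support-0exp} kills the off-diagonal terms. The surviving terms recombine, using $D$-commutation, to $d\sum(w_{ii})^2+c^2d=d$. The same computation gives contractivity, since $\Psi(1)\le 1$ in the unitization.
\item Show that the range lies in $\mathrm{C}^\ast(D,\varphi(F))$. This is clear, because $E(\cdot)\in D$ and the $w$'s lie in $\mathrm{C}^\ast(\varphi(F))$.
\item Prove the approximation property (iii). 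By \Cref{lem:dense-subspace} it suffices to treat $a=d\varphi(e_{pq}^{(m)})$ with $d\in D$. Then $\Psi(a)$ equals $d\,g(h)^2h$-type corrections times $\pi(e_{pq})$. Choosing $g$ so that $g(t)^2$ is uniformly close to $1$ on $[\epsilon,1]$, and using that $t\mapsto t$ is small on $[0,\epsilon]$, yields $\|\Psi(a)-a\|<\eta$ on $\mc{F}$.
\item Prove (i), that $\Psi(B)\subseteq B$. This reduces to showing that $x\,E(y\,b\,z)\,u\in B$ whenever $b\in B$ and $x,y,z,u$ are normalizers in $\mathrm{C}^\ast(D,\varphi(F))$ with $x=y^*$-type pairing. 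Here I invoke Donsig--Pitts \cite[Proposition~3.10]{DonsigPitts08}. Its consequence for intermediate algebras of a diagonal is that, for a normalizer $v$ and $b\in B$, the ``$v$-coefficient'' $E(v^*b)v$, and more generally $E(y b z)$ sandwiched back, lies in $B$; roughly, $B$ is the closed span of the normalizers it contains and is invariant under such compressions.
\end{enumerate}

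I expect step (4) to be the main obstacle. The Donsig--Pitts statement must be massaged into exactly the sandwiched form used in the definition of $\Psi$, and this has to be done without assuming that the $w$'s themselves lie in $B$; they need not, as only $D\subset B$ is given. The approach I would try first is to rewrite each term as $w_{ik}\,E(w_{ki}\,b\,w_{jl})\,w_{lj}=\lim \Theta_v(b)\cdot d'$, where $\Theta_v(b)$ is the Donsig--Pitts component of $b$ along the normalizer $v=w_{ik}w_{lj}$ and $d'\in D$. This representation should follow from the normalizer relations and from $E$ being a $D$-bimodule map. Membership in $B$ then follows from $D\subset B$ and closedness of $B$.
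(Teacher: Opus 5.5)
Your architecture matches the paper's. You compress the expectation $E$ by the normalizers $g(\varphi)(e_{ij}^{(n)})$, add a complementary term, and get complete positivity from a row factorization through $E\otimes\mathrm{id}_{M_{r_n}}$. Property (ii) comes from \Cref{lem:orthogonal-range-support-0exp}, (iii) from $g\approx1$ away from $0$, and (i) from Donsig--Pitts. Your perturbation $(1-\delta)\Psi+\delta E$ is a legitimate shortcut for faithfulness, since $E(B)\subset D\subset B$.

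\textbf{Normalization error.} The formula as written is mis-normalized. With your $h=\varphi(1_F)$, take $d\in D$: the terms with $i\neq j$ vanish, the terms with $k\neq l$ are killed by \Cref{lem:orthogonal-range-support-0exp}, and each of the remaining $r_n^2$ terms equals $(w_{ik}w_{ki})^2d=g(h)^4\pi(e_{ii})d$. So the first sum is $\sum_n r_n\,g(h)^4\pi(1_{M_{r_n}})\,d$, not $d\sum_{i,n}(w_{ii}^{(n)})^2$. Hence $\Psi\vert_D\neq\mathrm{id}_D$, and for $r_n\ge2$ the map need not even be contractive. To fix this, fix the middle index and use $\sum_{i,j}w_{i1}^{(n)}E\bigl(w_{1i}^{(n)}\,a\,w_{j1}^{(n)}\bigr)w_{1j}^{(n)}$. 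This is the genuine row compression $R\,(E\otimes\mathrm{id})(R^*aR)\,R^*$ with $R$ the row $(w_{11}^{(n)},\dots,w_{r_n1}^{(n)})$. Then take $c=(1-\sum_{i,n}(w_{ii}^{(n)})^4)^{1/2}$.

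\textbf{The genuine gap: step (4).} Turning a sandwiched term into the Donsig--Pitts form $E(bv)v^*$ is the heart of the proposition. It does not follow from $D$-bimodularity of $E$ plus normalizer relations, because you must pull a \emph{normalizer}, not an element of $D$, out of $E$. That is, you need $E(vxv^*)=vE(x)v^*$ for $v\in\mathcal{N}_A(D)$.

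This invariance is an extra property of $\mathrm{C}^\ast$-diagonals, coming from the unique extension property; the paper invokes \cite[Lemma~3.2]{CryNag17}. Bimodularity alone does not yield it. For example, take $\mathbb{C}1\subset M_2$ with $E=\omega(\cdot)1$ and $\omega$ a faithful non-tracial state. Every unitary $U$ normalizes $\mathbb{C}1$, yet $E(UxU^*)\neq UE(x)U^*$ for suitable $U,x$.

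Granting invariance, put $s\coloneqq\sqrt{g}(\varphi)$, so that $w_{1i}=s_{1i}s_{ii}$ and $w_{j1}=s_{ji}s_{i1}$ with $s_{ii}\in D$. Pull $s_{1i}$ and $s_{i1}$ out of $E$, then move elements of $D$ across $E$. Suppressing $n$, this gives the exact identity
\[
w_{i1}E(w_{1i}\,b\,w_{j1})w_{1j}=E\bigl(b\,(g^2)(\varphi)(e_{ji})\bigr)\,(g^2)(\varphi)(e_{ij}).
\]
Since $(g^2)(\varphi)(e_{ji})\in\mathcal{N}_A(D)$, each term lies in $B$ by \cite[Proposition~3.10]{DonsigPitts08}, with no limits or auxiliary $d'\in D$ needed.

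This is exactly the paper's identity \eqref{eq:cp-formula} with $g(t)=t^{\beta/2}$. The paper runs it in the other direction: it \emph{defines} $\Psi$ in the Donsig--Pitts form \eqref{eq:cond-exp-def}, so that (i) is immediate, and uses the identity only to prove complete positivity.
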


\begin{proof}
Assume first that $A$ is a unital $\mathrm{C}^\ast$-algebra (and thus $1_A\in D$). Let $\{e_{ij}^{(n)}: 1\le i,j\le r_n, 1\le n\le N\}$ be a system of matrix units for $F$ relative to $D_F$, let $\Phi\colon A\to D$ denote the (unique) conditional expectation of the $\mathrm{C}^\ast$-diagonal pair $(D\subset A)$, let $\mathcal{F}\Subset\mathrm{C}^\ast(D,\varphi(F))$ and let $\eta>0$. By \Cref{lem:dense-subspace}, for each $a\in\mc{F}$ we can find $d_a\in D$ and $a_{i,j}^{(n)}\in D$ such that with $b_a:=d_a+\sum_{i,j,n}a_{i,j}^{(n)}\varphi(e_{ij}^{(n)})$ we have that $\|b_a -a \|<\eta/3$ for all $a\in\mc{F}$. Let $M\coloneqq \max_{a\in\mc{F}}\sum_{n=1}^N\sum_{i,j=1}^{r_n}\|a_{i,j}^{(n)}\|$ and set $\varepsilon\coloneqq (3M+1)^{-1}\eta>0$. If $\Psi\colon A\to\mathrm{C}^\ast(D,\varphi(F))$ is a c.p.c.\ linear map with $\Psi\vert_D=\mathrm{id}_D$ and such that $\|\Psi(\varphi(e_{ij}^{(n)}))-\varphi(e_{ij}^{(n)})\|<\varepsilon$ for all $i,j,n$, then using that $D$ lies in the multiplicative domain of $\Psi$ and applying the triangle inequality yields that $\|\Psi(a)-a\|<\eta$ for all $a\in\mc{F}$.

Let $\beta>0$ be so small so that $\sup_{t\in[0,1]}|t-t^{1+2\beta}|<\varepsilon$. Set $h\coloneqq 1_A-\varphi(1_F)^{2\beta}\in D_+^1$ and note that $h$ commutes with $\mathrm{C}^\ast(D,\varphi(F))$. Using order zero functional calculus \cite[Corollary~3.2]{WinZac09} we define the map $\Psi\colon A\to \mathrm{C}^\ast(D,\varphi(F))$ as
\begin{equation}\label{eq:cond-exp-def}
\Psi(a)\coloneqq h \cdot \Phi(a) + \sum_{n=1}^N\sum_{i,j=1}^{r_n}\Phi\big(a\cdot \varphi^\beta(e_{ji}^{(n)})\big) \cdot\varphi^\beta(e_{ij}^{(n)}),\quad a\in A.
\end{equation}
We first show that $\Psi$ is completely positive. It suffices to show that the map 
\begin{equation}
\Psi_0(a)\coloneqq\sum_{n=1}^N\sum_{i,j=1}^{r_n}\Phi\big(a\cdot \varphi^\beta(e_{ji}^{(n)})\big) \cdot\varphi^\beta(e_{ij}^{(n)}),\quad a\in A
\end{equation}
is completely positive. We first observe that for all $a\in A$, $1\le n\le N$ and $1\le i,j\le r_n$ we have that
\begin{equation}\label{eq:cp-formula}
\Phi\big(a\cdot \varphi^\beta(e_{ji}^{(n)})\big)\cdot \varphi^\beta(e_{ij}^{(n)}) = \varphi^\frac{\beta}{2}(e_{i1}^{(n)})\cdot \Phi\big(\varphi^\frac{\beta}{2}(e_{1i}^{(n)})\cdot a \cdot \varphi^\frac{\beta}{2}(e_{j1}^{(n)})\big) \cdot \varphi^\frac{\beta}{2}(e_{1j}^{(n)}).
\end{equation}
Before proving this we recall three facts that will be necessary for our calculations. First, we have $\varphi^{\alpha_1+\alpha_2}(x\cdot y)=\varphi^{\alpha_1}(x) \cdot \varphi^{\alpha_2}(y)$ for all $\alpha_1,\alpha_2>0$ and all $x,y\in F$. Second, the functional calculi of normalizer-preserving order zero maps are again normalizer-preserving by \cite[Remark~4.2]{KopWin24}, and third, if $v\in\mc{N}_A(D)$, we have that $v\Phi(.)v^*=\Phi(v\; . \; v^*)$ by \cite[Lemma~3.2]{CryNag17}. Now starting to calculate from the right side of \eqref{eq:cp-formula} we have that
\begin{align*}
&\varphi^\frac{\beta}{2}(e_{i1}^{(n)}) \cdot  \Phi\big(\varphi^\frac{\beta}{2}(e_{1i}^{(n)}) \cdot  a \cdot  \varphi^\frac{\beta}{2}(e_{j1}^{(n)})\big) \cdot  \varphi^\frac{\beta}{2}(e_{1j}^{(n)}) =\\
& \varphi^\frac{\beta}{2}(e_{i1}^{(n)}) \cdot \Phi\big(\varphi^\frac{\beta}{4}(e_{1i}^{(n)}) \cdot \varphi^\frac{\beta}{4}(e_{ii}^{(n)})\cdot a\cdot \varphi^\frac{\beta}{4}(e_{ji}^{(n)})\cdot \varphi^\frac{\beta}{4}(e_{i1}^{(n)})\big) \cdot \varphi^\frac{\beta}{2}(e_{1j}^{(n)}) =\\
& \varphi^\frac{\beta}{2}(e_{i1}^{(n)}) \cdot \varphi^\frac{\beta}{4}(e_{1i}^{(n)}) \cdot \varphi^\frac{\beta}{4}(e_{ii}^{(n)}) \cdot \Phi\big(a \cdot \varphi^\frac{\beta}{4}(e_{ji}^{(n)})\big)\cdot \varphi^\frac{\beta}{4}(e_{i1}^{(n)})\cdot \varphi^\frac{\beta}{2}(e_{1j}^{(n)})=\\
&\varphi^\beta(e_{ii}^{(n)})\cdot \Phi\big(a \cdot \varphi^\frac{\beta}{4}(e_{ji}^{(n)})\big) \cdot \varphi^\frac{3\beta}{4}(e_{ij}^{(n)})=\\
&\Phi\big(a\cdot \varphi^\frac{\beta}{4}(e_{ji}^{(n)})\big) \cdot \varphi^\beta(e_{ii}^{(n)}) \cdot \varphi^\frac{3\beta}{4}(e_{ij}^{(n)})=\\
&\Phi\big(a \cdot \varphi^\frac{\beta}{4}(e_{ji}^{(n)})\cdot \varphi^\frac{3\beta}{4}(e_{ii}^{(n)})\big) \cdot \varphi^\frac{\beta}{4}(e_{ii}^{(n)}) \cdot \varphi^\frac{3\beta}{4}(e_{ij}^{(n)})=\\
&\Phi\big(a \cdot \varphi^\beta(e_{ji}^{(n)})\big)\cdot \varphi^\beta(e_{ij}^{(n)}),
\end{align*}
as we wanted. For $1\le n\le N$, set
\begin{equation}
v_n\coloneqq\begin{bmatrix}\varphi^\frac{\beta}{2}(e_{11}^{(n)})&\varphi^\frac{\beta}{2}(e_{21}^{(n)})&\dots & \varphi^\frac{\beta}{2}(e_{r_n1}^{(n)}) \end{bmatrix}\in M_{1\times r_n}(A)
\end{equation}
and let $\theta_n^{(1)}\colon A\to M_{r_n}(A)$ and $\theta_n^{(2)}\colon M_{r_n}(A)\to A$ be the maps defined by $\theta_n^{(1)}(.)\coloneqq v_n^*\; .\; v_n$ and $\theta_n^{(2)}(.)\coloneqq v_n \; . \; v_n^*$, which are completely positive. Using \eqref{eq:cp-formula}, we have that
\begin{equation}
\Psi_0 = \sum_{n=1}^N\theta_n^{(2)}\circ (\Phi\otimes\mathrm{id}_{M_{r_n}})\circ \theta_n^{(1)}
\end{equation}
which is completely positive as a sum of completely positive maps.

To see that $\Psi$ is faithful, let $a\in A$ be such that $\Psi(a^*a)=0$. Then $\Phi(a^*a)h=0$ and $\sum_{n=1}^N\sum_{i,j=1}^{r_n}\Phi\big(a^*a\cdot \varphi^\beta(e_{ji}^{(n)})\big)\cdot \varphi^\beta(e_{ij}^{(n)})=0$. Applying $\Phi$ to the latter equation, by \Cref{lem:orthogonal-range-support-0exp} and the fact that $D$ lies in the multiplicative domain of $\Phi$, we obtain
\begin{equation}\label{eq:psi-faithful-second}
\sum_{n=1}^N\sum_{j=1}^{r_n}\Phi(a^*a)\varphi^{2\beta}(e_{jj}^{(n)})=0.
\end{equation}
Now \eqref{eq:psi-faithful-second} can be re-written as $\Phi(a^*a)\varphi(1_F)^{2\beta}=0$, and so combining this with $\Phi(a^*a)h=0$ we get $\Phi(a^*a)=0$. Since $\Phi$ is faithful, we get $a=0$ as we wanted.

Moreover, if $D\subset B\subset A$ is an intermediate sub-$\mathrm{C}^\ast$-algebra, by \cite[Proposition~3.10]{DonsigPitts08} we have that $\Phi(bv)v^*\in B$ for all $b\in B$ and all $v\in\mathcal{N}_A(D)$. Since order zero calculus of a normalizer-preserving order zero map is also a normalizer-preserving order-zero map, combining these with a direct inspection of \eqref{eq:cond-exp-def} we get that $\Psi(B)\subset B$, which establishes \ref{item:condexp1}.

For $f\in D$, using in the first and second line below respectively that $D$ is in the multiplicative domain of $\Phi$ and \Cref{lem:orthogonal-range-support-0exp}, we have that
\begin{align*}
\Psi(f) &= h f + \sum_{n=1}^N\sum_{i,j=1}^{r_n}f\cdot \Phi(\varphi^\beta(e_{ji}^{(n)}))\cdot\varphi^\beta(e_{ij}^{(n)})\\
&=  hf + \sum_{n=1}^N\sum_{j=1}^{r_n}f\cdot\varphi^\beta(e_{jj}^{(n)})\cdot\varphi^\beta(e_{jj}^{(n)})\\
&=hf + \sum_{n=1}^N\sum_{j=1}^{r_n}f\cdot \varphi^{2\beta}(e_{jj}^{(n)})\\
&= hf + \varphi^{2\beta}(1_F)f\\
&= (h+\varphi(1_F)^{2\beta})f\\
&=f
\end{align*}
and so $\Psi$ is the identity on $D$ (which also contains $\varphi(e_{jj}^{(n)})$ for all $1\le j\le r_n$ and all $1\le n\le N$). This establishes \ref{item:condexp2} and also shows that $\Psi$ is unital, hence contractive.

For a matrix unit $e_{k\ell}^{(n)}$ with $k\neq \ell$, by \Cref{lem:orthogonal-range-support-0exp} we have $\Phi(\varphi(e_{k\ell}^{(n)}))=0$. Using \Cref{lem:orthogonal-range-support-0exp} in the third step below, we get
\begin{align*}
\Psi(\varphi(e_{k\ell}^{(n)}))  &= \sum_{i,j=1}^{r_n}\Phi(\varphi(e_{k\ell}^{(n)})\cdot\varphi^\beta(e_{ji}^{(n)}))\cdot\varphi^\beta(e_{ij}^{(n)})\\
&= \sum_{i=1}^{r_n}\Phi(\varphi^{1+\beta}(e_{ki}^{(n)}))\cdot\varphi^\beta(e_{i\ell}^{(n)})\\
&=\varphi^{1+2\beta}(e_{k\ell}^{(n)}).
\end{align*}
Let $\pi\colon F\to \mathcal{M}(\mathrm{C}^\ast(\varphi(F)))$ denote the supporting $^*$-homomorphism of $\varphi$ \cite[Theorem~2.3]{WinZac09}, i.e.\ $\varphi(x)=\varphi(1_F)\pi(x)=\pi(x)\varphi(1_F)$ for all $x\in F$. For any $1\le k,\ell\le r_n$ and $1\le n\le N$ we have
\begin{align*}
\|\varphi(e_{k\ell}^{(n)})-\varphi^{1+2\beta}(e_{k\ell}^{(n)})\|&=\|\varphi(1_F)\pi(e_{k\ell}^{(n)})-\varphi(1_F)^{1+2\beta}\pi(e_{k\ell}^{(n)})\|\\
&\le \|\varphi(1_F)-\varphi(1_F)^{1+2\beta}\|\\
&<\varepsilon
\end{align*} 
by our choice of $\beta$. By the argument in the first paragraph of the proof, this establishes \ref{item:condexp3} and completes the proof for the unital case.

Assume now that $A$ is not unital, let $\eta>0$ and let $\mathcal{F}\Subset\mathrm{C}^\ast(D,\varphi(F))$. Then the pair of minimal unitizations $(D^\sim\subset A^\sim)$ is a unital $\mathrm{C}^\ast$-diagonal and $\varphi(\mc{N}_F(D_F))\subset\mc{N}_A(D)\subset\mc{N}_{A^\sim}(D^\sim)$, so by the unital case we obtain a faithful unital c.p.\ map $\bar\Psi\colon A^\sim\to\mathrm{C}^\ast(D^\sim,\varphi(F))$ satisfying $\bar\Psi(C)\subset C$ for all $D^\sim\subset C\subset A^\sim$, $\bar\Psi\vert_{D^\sim}=\mathrm{id}_{D^\sim}$ and $\|\bar\Psi(a)-a\|<\eta$ for all $a\in\mathcal{F}$. Let $(h_i)_i\subset D_+^1$ be an approximate unit for $A$. For $a\in A$, using that $D$ is a subset of the multiplicative domain of $\bar\Psi$, we have that
\[
\bar\Psi(a)=\lim_i\bar\Psi(h_ia)=\lim_ih_i\bar\Psi(a)\in A
\]
since $A$ is an ideal in $A^\sim$. It follows that the range of the c.p.c.\ map $\bar\Psi\vert_A$ is contained in $\mathrm{C}^\ast(D,\varphi(F))$, and if $D\subset B\subset A$ is an intermediate sub-$\mathrm{C}^\ast$-algebra, since $D^\sim\subset B^\sim\subset A^\sim$, we have that $\bar\Psi\vert_A(B) =\bar\Psi(B)\subset B^\sim \cap A=B$, as we wanted.
\end{proof}

\begin{example}
Allowing that $\Psi$ is only \emph{approximately} the identity on finite subsets of $\mathrm{C}^\ast(D,\varphi(F))$ is necessary. Consider for example $A=C(X)\otimes M_2$ for some infinite compact metric space $X$ and $D=C(X)\otimes D_2$. Let $x_0\in X$ be a point that is not isolated and take $h\in C(X)_+^1$ with $h(x_0)=0$ and $h(x)\ne0$ for $x\ne x_0$. The map $\varphi\colon M_2\to A$ given by $\varphi(z)=h\otimes z$ is c.p.c.\ and order zero, and $\varphi(\mc{N}_{M_2}(D_2))\subset\mc{N}_A(D)$. Moreover,
\[
\mathrm{C}^\ast(D,\varphi(M_2)) = \bigg\{\begin{bmatrix}f_{11}& f_{12}\\ f_{21} & f_{22}\end{bmatrix}: f_{ij}\in C(X) \text{ with }f_{12}(x_0)=f_{21}(x_0)=0.\bigg\}
\]
Now suppose there is a conditional expectation $\Psi\colon A\to \mathrm{C}^\ast(D,\varphi(M_2))$. For $f\in C(X)$, note that $\Psi(f\otimes e_{12})\cdot (1\otimes e_{11})= \Psi((f\otimes e_{12})\cdot(1\otimes e_{11}))=0$. For $x\in X\setminus\{x_0\}$ let $g\in C(X)$ be such that $g(x)=1$ and $g(x_0)=0$. Then $fg\otimes e_{12} = \Psi(fg\otimes e_{12}) = \Psi(f\otimes e_{12})\cdot (g\otimes e_{22})$. Evaluating at $x$ we obtain that $f(x)e_{12}=\Psi(f\otimes e_{12})(x)\cdot e_{22}$ and since this holds for all $x\ne x_0$ and $x_0$ is not isolated, we have that $f\otimes e_{12} = \Psi(f\otimes e_{12})\cdot (1\otimes e_{22})=\Psi(f\otimes e_{12})$. As $f$ was arbitrary this implies that $f(x_0)=0$ for all $f\in C(X)$, a contradiction.
\end{example}

\section{Intermediate subalgebras generated by a single normalizer-preserving order zero map and subhomogeneity}

\noindent In this section we establish \Cref{thmi:C}, which will be necessary for the main result of the paper. We begin with a lemma that will allow us to describe a large part of intermediate sub-$\mathrm{C}^\ast$-algebras by using ``coordinate systems", as discussed in the last part of the introduction.

\begin{lemma}\label{lem:ideal-matrix-subhom}
Let $(D\subset A)$ be a unital, abelian sub-$\mathrm{C}^\ast$-algebra, let $r\ge1$ and let $\varphi\colon M_r\to A$ be a linear c.p.c.\ order zero map with $\varphi(\mc{N}_{M_r}(D_r))\subset\mc{N}_A(D)$. If $J\coloneqq\langle\varphi(1_r)\rangle\trianglelefteq\mathrm{C}^\ast(D,\varphi(M_r))$, then there is an open set $U\subset\widehat{D}$ such that
\begin{equation}
J\cong C_0(U)\otimes M_r,
\end{equation}
with $D\cap J$ identified with $C_0(U)\otimes D_r$ under this isomorphism. 

Furthermore, if $D\subset B\subset \mathrm{C}^\ast(D,\varphi(M_r))$ is an intermediate sub-$\mathrm{C}^\ast$-algebra, there are (possibly empty) open subsets $V_{ij}\subset U$ satisfying
\begin{enumerate}[label=\normalfont{(\roman*)}]
\item $V_{ii}=U$ for all $1\le i\le r$,
\item $V_{ij}=V_{ji}$ for all $1\le i,j\le r$,
\item $V_{ij}\cap V_{jk}\subset V_{ik}$ for all $1\le i,j,k\le r$,
\end{enumerate}
and such that under the above isomorphism $B\cap J$ is identified with
\begin{equation}
\mathrm{span}\{f\otimes e_{ij}: f\in C_0(V_{ij}),\; 1\le i,j\le r\}.
\end{equation}
\end{lemma}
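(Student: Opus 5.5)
Write $C\coloneqq\mathrm{C}^\ast(D,\varphi(M_r))$, $h\coloneqq\varphi(1_r)\in D_+^1$, and $h_i\coloneqq\varphi(e_{ii})\in D$; these lie in $D$ by \cite[Proposition~1.9]{LiLiaWin23}. The plan is to build $J\cong C_0(U)\otimes M_r$ from the structure theorem for order zero maps together with the normalizer property, and then read off $B\cap J$ coordinatewise using $D$-bimodule arguments.

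\textbf{Step 1: the isomorphism for $J$.} Let $\pi\colon M_r\to\mc{M}(\mathrm{C}^\ast(\varphi(M_r)))$ be the supporting $^*$-homomorphism, so that $\varphi(x)=h\pi(x)$. Set $U_i\coloneqq\supp(h_i)\subset\widehat D$. The $U_i$ are pairwise disjoint, since $h_i\perp h_j$ for $i\ne j$.
\begin{itemize}
\item The element $v_{ij}\coloneqq\varphi(e_{ij})$ is a normalizer with $v_{ij}^*v_{ij}=\varphi^2(e_{jj})$ and $v_{ij}v_{ij}^*=\varphi^2(e_{ii})$. Conjugation by $v_{ij}$, composed with polar decomposition, therefore induces a homeomorphism $\alpha_{ij}\colon U_j\to U_i$.
\item Taking $v_{ij}$ and then $v_{jk}$ gives $\alpha_{ij}\alpha_{jk}=\alpha_{ik}$.
\item Because $v_{ij}^*v_{ij}$ and $v_{ij}v_{ij}^*$ are equal up to transport by $\alpha_{ij}$, we get $h_j=h_i\circ\alpha_{ij}$ on $U_j$.
\end{itemize}
Put $U\coloneqq U_1$ and $U'\coloneqq\bigsqcup_i U_i$. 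The ideal of $C$ generated by $h$ is the closed span of $D h D$ together with the products $D\varphi(e_{ij})D$ (by \Cref{lem:dense-subspace} and the fact that $h\in D$). Hence $J$ is the closed span of $C_0(U_i)\,\pi(e_{ij})\,C_0(U_j)$, with the partial isometries $\pi(e_{ij})$ lying in the multiplier algebra. Define
\[
\Theta(f\otimes e_{ij})\coloneqq (f\circ\alpha_{1i})\,u_{ij},
\]
where $u_{ij}$ is the strict limit of $\varphi^{1/n}(e_{ij})$, which is well defined as a multiplier of $J$. Check that $\Theta$ respects the matrix-unit relations; this uses the cocycle identity $\alpha_{ij}\alpha_{jk}=\alpha_{ik}$. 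Then $\Theta$ is a $^*$-homomorphism $C_0(U)\otimes M_r\to J$, surjective by the spanning description and injective because its restriction to $C_0(U)\otimes D_r$ is injective (the $U_i$ are disjoint copies of $U$) and $C_0(U)\otimes M_r$ has no ideal missing the diagonal. Finally, $D\cap J=C_0(U')$, which corresponds to $C_0(U)\otimes D_r$.

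\textbf{Step 2: the intermediate algebra.} For $i,j$ define
\[
I_{ij}\coloneqq\{f\in C_0(U): f\otimes e_{ij}\in\Theta^{-1}(B\cap J)\}.
\]
Since $B\supset D$ and $B\cap J$ is a $D$-bimodule, $I_{ij}$ is a closed ideal of $C_0(U)$, so $I_{ij}=C_0(V_{ij})$ for an open set $V_{ij}\subset U$. The three conditions follow directly:
\begin{itemize}
\item $V_{ii}=U$, because $C_0(U)\otimes D_r\subset B\cap J$;
\item $V_{ij}=V_{ji}$, by taking adjoints;
\item $V_{ij}\cap V_{jk}\subset V_{ik}$, by multiplying $f\otimes e_{ij}$ with $g\otimes e_{jk}$, which shows $C_0(V_{ij})C_0(V_{jk})\subset C_0(V_{ik})$.
\end{itemize}

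\textbf{Step 3: $B\cap J$ is the span of its entries.} Take $b\in\Theta^{-1}(B\cap J)$ with entries $b_{ij}$. Then $(1\otimes e_{ii})\,b\,(1\otimes e_{jj})=b_{ij}\otimes e_{ij}$. This corner compression is not a priori inside $B$, since $1\otimes e_{ii}$ need not lie in $C_0(U)\otimes D_r$. To fix this, multiply on the left and right by approximate units $g_\lambda\otimes e_{ii}$ and $g_\lambda\otimes e_{jj}$ from $C_0(U)\otimes D_r\subset D\subset B$. The products converge to $b_{ij}\otimes e_{ij}$ and lie in $B$. Hence each $b_{ij}\in C_0(V_{ij})$, which gives $B\cap J=\mathrm{span}\{C_0(V_{ij})\otimes e_{ij}\}$; the span is closed since it is a finite direct sum of closed subspaces.

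\textbf{Main obstacle.} The hard part will be Step 1, where the order zero structure has to be turned into honest matrix units over a common open set $U$. Concretely, this means verifying that the homeomorphisms $\alpha_{ij}$ satisfy the cocycle identity and intertwine the $h_i$, and that $\Theta$ is a surjective isomorphism onto $J$. I would first attempt this by using the identity $\varphi(x)\varphi(y)=h\varphi(xy)$ to compute $\Theta$ on products. If the approximation by $\varphi^{1/n}$ causes trouble, I would instead use the cone description $C_0((0,1])\otimes M_r\to C$ of $\varphi$ and identify $J$ as a quotient of $C_0(\widehat{D}\times(0,1])\otimes M_r$.
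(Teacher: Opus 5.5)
Your proposal is correct, and Steps 2 and 3 are the paper's argument: the ideals $I_{ij}$, the Urysohn argument for (iii), and compression by approximate units from $C_0(U)\otimes D_r$. Step 1, however, verifies the isomorphism differently, even though it produces the same map.

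The paper never introduces homeomorphisms between the $U_i$. It takes the corner $D_0=\overline{\varphi(e_{11})C\varphi(e_{11})}$ and invokes \cite[Proposition~4.3]{KopWin24} to get $D_0\subset D$, so $D_0=C_0(U)$ with $U=\supp(\varphi(e_{11}))$. It then sets $\iota(d)=\sum_j\pi(e_{j1})d\pi(e_{1j})$. Since $\iota(D_0)$ commutes with $\pi(M_r)$ by pure matrix-unit algebra, $\bar\iota(d\otimes x)=\iota(d)\pi(x)$ is the isomorphism. Your cocycle identity is replaced there by the trivial relation $\pi(e_{ij})\pi(e_{jk})=\pi(e_{ik})$. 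Your $\Theta$ is literally $\bar\iota$, because $\pi(e_{i1})f\pi(e_{1i})=f\circ\alpha_{1i}$ for $f\in C_0(U)$.

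Your route costs three things:
\begin{enumerate}
\item the Kumjian--Renault partial homeomorphism of a normalizer;
\item the cocycle check;
\item one step you leave implicit.
\end{enumerate}
That last step is the intertwining relation $\pi(e_{ij})d=(d\circ\alpha_{ji})\pi(e_{ij})$ for $d\in C_0(U_j)$, which is what your matrix-unit computation actually uses, and which "composed with polar decomposition" does not quite state. It follows from the normalizer relation $\varphi(e_{ij})d=(d\circ\alpha_{ji})\varphi(e_{ij})$ by cancelling $h$. The difference of the two sides lies in $J$ and is annihilated by $h$. Since $(h^{1/n})_n$ is an approximate unit for $J$, the difference is zero. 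This cancellation plays the role of the paper's citation that the corners $\overline{\varphi(e_{jj})C\varphi(e_{jj})}$ lie in $D$.

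In exchange, your version gives an explicit picture: the $U_i$ are homeomorphic copies of $U$ glued by the normalizers $\varphi(e_{ij})$. This is exactly the identification $U^{(n)}\times\{r\}$ used later in the proof of \Cref{thm:subhom}. You also spell out injectivity (no nonzero ideal of $C_0(U)\otimes M_r$ misses $C_0(U)\otimes D_r$), which the paper only asserts when it calls $\bar\iota$ an embedding.
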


\begin{proof}
If $\varphi=0$ we can just take $U$ to be the empty set and there is nothing to show, so we assume that $\varphi$ is not zero. Set $C\coloneqq \mathrm{C}^\ast(D,\varphi(M_r))$ and note that $\varphi(1_r)$ is central in $C$, so 
\begin{equation}\label{eq:j-her}
J = \overline{\varphi(1_r)C\varphi(1_r)}
\end{equation}
and thus the sequence $\{\varphi(1_r)^\frac{1}{m}\}_{m=1}^\infty$ is an approximate unit for $J$. Let $\pi\colon M_r\to \mathcal{M}(\mathrm{C}^\ast(\varphi(M_r)))$ be the supporting $^*$-homomorphism of $\varphi$ and note that $\pi$ is unital (cf.\ the proof of \cite[Theorem~2.3]{WinZac09}). For $x\in M_r$ we have
\begin{align*}
\|\varphi(x)-\varphi(1_r)^\frac{1}{m}\varphi(x)\|&=\|(\varphi(1_r)-\varphi(1_r)^{1+\frac{1}{m}})\cdot\pi(x)\|\\
&\le \|\varphi(1_r)-\varphi(1_r)^{1+\frac{1}{m}}\| \cdot\|x\|\xrightarrow[m\to\infty]{}0
\end{align*}
which is to say that $\mathrm{C}^\ast(\varphi(M_r))\subset J$, and this inclusion is non-degenerate as we already noted that $\{\varphi(1_r)^\frac{1}{m}\}_{m=1}^\infty$ is an approximate unit for $J$. In particular, we have a unital embedding of multiplier algebras $\mc{M}(\mathrm{C}^\ast(\varphi(M_r)))\subset\mc{M}(J)$ (see \cite[Chapter 2]{Lance95}) so we can, and will, view $\pi$ as a unital embedding in $\mc{M}(J)$. Set $D_0\coloneqq \overline{\pi(e_{11})J\pi(e_{11})}\subset\mathcal{M}(J)$ and note that $D_0\subset J$ since $J\trianglelefteq\mc{M}(J)$, and moreover by \eqref{eq:j-her}
\[
D_0=\overline{\pi(e_{11})\varphi(1_r)C\varphi(1_r)\pi(e_{11})}=\overline{\varphi(e_{11})C\varphi(e_{11})}.
\]
By the $5^{\mathrm{th}}$ paragraph of the proof of \cite[Proposition~4.3]{KopWin24} we have that $D_0\subset D$, and thus $D_0=\overline{\varphi(e_{11})D\varphi(e_{11})}$. In particular, $D_0= C_0(U)$, where $U\subset\widehat{D}$ is the open support of $\varphi(e_{11})\in D$. Let $\iota\colon D_0\to \mathcal{M}(J)$ be the map
\begin{equation}\label{eq:subhomlemma-iota}
\iota(d) = \sum_{j=1}^r\pi(e_{j1})d\pi(e_{1j}),\quad d\in D_0.
\end{equation}
We have that $\iota$ is a $^\ast$-embedding, and arguing as above we note that $\iota(D_0)\subset D\cap J$ (in fact $\iota(D_0)=\overline{\varphi(1_r)D\varphi(1_r)}$). Moreover $[\iota(D_0),\pi(M_r)]=0$ as it can be directly verified that $\iota(D_0)$ commutes with the image of each matrix unit of $M_r$  under $\pi$. This induces an embedding $\bar\iota\colon D_0\otimes M_r \to\mc{M}(J)$ given by
\begin{equation}
\bar\iota(d\otimes x) = \iota(d)\pi(x),\quad d\in D_0,\; x\in M_r
\end{equation}
which in fact takes values in $J$. Also, since $C=\mathrm{C}^\ast(D,\varphi(M_r))$, it follows by \eqref{eq:j-her} that $\bar\iota(D_0\otimes M_r)=J$, and note also that $\bar\iota(D_0\otimes D_r) =\sum_{j=1}^r\overline{\varphi(e_{jj})D\varphi(e_{jj})} = D\cap J$. This establishes the first part of the lemma.


For the second part of the statement, it suffices to show that a $\mathrm{C}^\ast$-algebra $B$ with $C_0(U)\otimes D_r\subset B\subset C_0(U)\otimes M_r$ has the form
\[
B=\mathrm{span}\{f\otimes e_{ij}: f\in C_0(V_{ij}),\; 1\le i,j\le r\}
\]
for $V_{ij}\subset U$ open with $V_{ii}=U$, $V_{ij}=V_{ji}$ and $V_{ij}\cap V_{jk}\subset V_{ik}$ for all $i,j,k$. Note that if $b=\sum_{i,j=1}^rf_{ij}\otimes e_{ij}$ with $f_{ij}\in C_0(U)$, then $f_{ij}\otimes e_{ij}=\lim_k(u_k\otimes e_{ii})\cdot b\cdot(u_k\otimes e_{jj})\in B$, where $\{u_k\}_k\subset C_0(U)_+^1$ is an approximate unit for $C_0(U)$. Let $I_{ij}\coloneqq \{f\in C_0(U): f\otimes e_{ij}\in B\}$.  Clearly $I_{ij}$ is a closed linear subspace of $C_0(U)$ and $I_{ij}^*=I_{ji}$. Note that if $g\in C_0(U)$ we have that
\[
(fg)\otimes e_{ij}=(gf)\otimes e_{ij}=(g\otimes 1)\cdot(f\otimes e_{ij})\in B
\]
since $g\otimes 1\in C_0(U)\otimes D_r\subset B$; this shows that $I_{ij}$ is a closed two-sided ideal and thus it is also self-adjoint, whence $I_{ij}=I_{ji}$. Moreover, $I_{ii}=C_0(U)$, since $C_0(U)\otimes D_r\subset B$. We thus obtain open subsets $V_{ij}\subset U$ such that $I_{ij} = \{f\in C_0(U): f\vert_{U\setminus V_{ij}}=0\}=C_0(V_{ij})$. It follows from our earlier observations that $V_{ii}=U$ and that $V_{ij}=V_{ji}$ for all $i,j$.  Note also that $V_{ij}\cap V_{jk}\subset V_{ik}$: indeed, if this was not the case, then let $x\in V_{ij}\cap V_{jk}$ be a point that does not belong to $V_{ik}$ and find (by Urysohn's lemma) a function $f\in C_0(U)_+^1$ such that $f(x)=1$ and $\supp(f)\subset V_{ij}\cap V_{jk}$. Then $f\in I_{ij}\cap I_{jk}$. But note that $f\otimes e_{ik}=(f^{1/2}\otimes e_{ij})(f^{1/2}\otimes e_{jk})\in B$ so $f\in I_{ik}$, and thus $\supp(f)\subset V_{ik}$. This is a contradiction, since $f(x)=1$ and $x\not\in V_{ik}$. Finally, for $f\in I_{ij}$ we have that $f\otimes e_{ij}\in B$, so $B$ contains all elements of the form $f\otimes e_{ij}$ with $f\in C_0(V_{ij})$ and as mentioned earlier, any $b\in B$ is of the form $b=\sum_{i,j=1}^rf_{ij}\otimes e_{ij}$ with $f_{ij}\otimes e_{ij}\in B$, i.e.\ $f_{ij}\in I_{ij}=C_0(V_{ij})$ for all $i,j$.
\end{proof}

We note that an analogous statement of the preceding lemma holds also when the domain of the order zero map $\varphi$ is an arbitrary finite-dimensional $\mathrm{C}^\ast$-algebra $F$ (as opposed to a matrix algebra); indeed, write $F$ as a direct sum of matrix algebras $F\cong \bigoplus_{n=1}^NM_{r_n}$ and consider the ideals $J_n\coloneqq\langle\varphi(1_{M_{r_n}})\rangle \trianglelefteq\mathrm{C}^\ast(D,\varphi(F))$. Note that these are pairwise orthogonal (cf.\ the 3$^\mathrm{rd}$ paragraph in the proof of \cite[Proposition~4.3]{KopWin24}) and that $J_1+\dots +J_N=\langle\varphi(1_F)\rangle\trianglelefteq\mathrm{C}^\ast(D,\varphi(F))$. Applying \Cref{lem:ideal-matrix-subhom} to each $J_n$, we obtain the following corollary.

\begin{corollary}\label{cor:ideal-subhom}
Let $(D\subset A)$ be a unital, abelian sub-$\mathrm{C}^\ast$-algebra, let $F\coloneqq \bigoplus_{n=1}^NM_{r_n}$, let $D_F\coloneqq \bigoplus_{n=1}^ND_{r_n}$, and let $\varphi\colon F\to A$ be a linear c.p.c.\ order zero map such that $\varphi(\mc{N}_F(D_F))\subset\mc{N}_A(D)$. Set $J\coloneqq \langle \varphi(1_F)\rangle \trianglelefteq \mathrm{C}^\ast(D,\varphi(F))$. Then there exist open sets $U^{(1)},\dots, U^{(N)}\subset \widehat{D}$ such that
\begin{equation}\label{eq:identification-j}
(D\cap J \subset J)\cong \bigg(\bigoplus_{n=1}^N C_0(U^{(n)})\otimes D_{r_n}\subset \bigoplus_{n=1}^N C_0(U^{(n)})\otimes M_{r_n}\bigg).
\end{equation}
Each set $U^{(n)}$ is nonempty if and only if $\varphi$ is nonzero on the corresponding summand $M_{r_n}$ of $F$.

Furthermore, if $D\subset B\subset \mathrm{C}^\ast(D,\varphi(F))$ is an intermediate sub-$\mathrm{C}^\ast$-algebra, then there are open subsets $V_{ij}^{(n)}\subset U^{(n)}$ satisfying
\begin{enumerate}[label=\normalfont{(\roman*)}]
\item\label{item:setsVij-i} $V_{ii}^{(n)}=U^{(n)}$
\item\label{item:setsVij-ii} $V_{ij}^{(n)}=V_{ji}^{(n)}$
\item\label{item:setsVij-iii} $V_{ij}^{(n)}\cap V_{jk}^{(n)}\subset V_{ik}^{(n)}$
\end{enumerate}
for all $1\le i,j,k\le r_n$, $n=1,\dots,N$, and such that under the above isomorphism $B\cap J$ is identified with
\begin{equation}\label{eq:bcapj}
\bigoplus_{n=1}^N\mathrm{span}\{f\otimes e_{ij}^{(n)}: f\in C_0(V_{ij}^{(n)}),\; 1\le i,j\le r_n\}.
\end{equation}
\end{corollary}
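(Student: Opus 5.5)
The plan is to reduce \Cref{cor:ideal-subhom} to \Cref{lem:ideal-matrix-subhom} by splitting $F$ into its simple summands. For each $n$, let $\varphi_n\coloneqq\varphi\vert_{M_{r_n}}$. This is again a c.p.c.\ order zero map, and it is normalizer-preserving, since $\mc{N}_{M_{r_n}}(D_{r_n})\subset\mc{N}_F(D_F)$ under the inclusion of the $n$-th summand. Set $C\coloneqq\mathrm{C}^\ast(D,\varphi(F))$ and $J_n\coloneqq\langle\varphi(1_{M_{r_n}})\rangle\trianglelefteq C$.

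\textbf{Step 1: the ideals $J_n$.} First I would record the orthogonality relations. Because $\varphi$ is order zero and the $1_{M_{r_n}}$ are pairwise orthogonal, $\varphi(M_{r_n})\perp\varphi(M_{r_m})$ for $n\neq m$. Each $\varphi(1_{M_{r_n}})\in D$ and is central in $C$: it commutes with $D$, and with $\varphi(F)$ by the supporting homomorphism. Hence the $J_n=\overline{\varphi(1_{M_{r_n}})C\varphi(1_{M_{r_n}})}$ are pairwise orthogonal, and their sum is the closed ideal $J$ generated by $\varphi(1_F)=\sum_n\varphi(1_{M_{r_n}})$, so $J=\bigoplus_nJ_n$.

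Next I would show $J_n=\overline{\varphi(1_{M_{r_n}})\,\mathrm{C}^\ast(D,\varphi_n(M_{r_n}))\,\varphi(1_{M_{r_n}})}$. By \Cref{lem:dense-subspace}, $C$ is densely spanned by $D$ and $D\varphi(e^{(m)}_{ij})$. Cutting by $\varphi(1_{M_{r_n}})^{1/k}$ kills every term with $m\neq n$, so $J_n$ is also the ideal generated by $\varphi_n(1)$ inside $C_n\coloneqq\mathrm{C}^\ast(D,\varphi_n(M_{r_n}))$. I would then apply the first part of \Cref{lem:ideal-matrix-subhom} to $\varphi_n$. This gives an open $U^{(n)}$, namely the support of $\varphi(e^{(n)}_{11})$, with $(D\cap J_n\subset J_n)\cong(C_0(U^{(n)})\otimes D_{r_n}\subset C_0(U^{(n)})\otimes M_{r_n})$. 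Summing over $n$ yields \eqref{eq:identification-j}. The set $U^{(n)}$ is empty exactly when $\varphi(e^{(n)}_{11})=0$, which by order zero (all $e_{ii}$ are equivalent, and $\varphi(e_{ii})=\varphi(e_{i1})\varphi(e_{1i})$ in the functional calculus sense) happens iff $\varphi_n=0$.

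\textbf{Step 2: the intermediate algebra $B$.} Since $D\cap J_n\subset B$, the elements $\varphi(1_{M_{r_n}})^{1/k}$ lie in $B$. Therefore $B\cap J$ decomposes as $\bigoplus_n(B\cap J_n)$, where $B\cap J_n$ is the cut-down of $B\cap J$ by the approximate unit of $J_n$. Each $B\cap J_n$ satisfies $C_0(U^{(n)})\otimes D_{r_n}\subset B\cap J_n\subset C_0(U^{(n)})\otimes M_{r_n}$ under the identification. The second half of the proof of \Cref{lem:ideal-matrix-subhom} applies verbatim to such an algebra: it only used this sandwich. It produces the sets $V^{(n)}_{ij}$ with properties \ref{item:setsVij-i}--\ref{item:setsVij-iii} and the description \eqref{eq:bcapj}.

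The main obstacle is checking that the structure of $J_n$ computed inside $C$ agrees with that inside $C_n$. In particular one needs $D\cap J_n=\bigoplus_j\overline{\varphi(e^{(n)}_{jj})D\varphi(e^{(n)}_{jj})}$ regardless of the other summands. I expect this to follow from the orthogonality and centrality in Step 1, together with the cutting argument via \Cref{lem:dense-subspace}.
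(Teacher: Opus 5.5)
Your proposal is correct and is essentially the paper's argument. You decompose $J$ as the sum of the pairwise orthogonal ideals $J_n=\langle\varphi(1_{M_{r_n}})\rangle$ and apply \Cref{lem:ideal-matrix-subhom} summand by summand; your extra check that $J_n$ equals, as a set, the ideal generated by $\varphi_n(1_{r_n})$ in $\mathrm{C}^\ast(D,\varphi_n(M_{r_n}))$ (via \Cref{lem:dense-subspace} and $\varphi(1_{M_{r_n}})\varphi(M_{r_m})=0$ for $m\neq n$) is exactly what licenses that application, a point the paper leaves implicit, and it already settles the ``main obstacle'' you flag, since $D\cap J_n$ is then literally the same set.
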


\begin{remark}\label{rmk:subhomogeneity}
Denoting by $\pi\colon \mathrm{C}^\ast(D,\varphi(F))\to \mathrm{C}^\ast(D,\varphi(F))/J$ the quotient map, observe that $\pi(\mathrm{C}^\ast(D,\varphi(F)))=\pi(D)$, which is abelian. Since the quotient is abelian and by \Cref{cor:ideal-subhom} the ideal $J$ is subhomogeneous, $\mathrm{C}^\ast(D,\varphi(F))$ is also subhomogeneous, as an extension of subhomogeneous $\mathrm{C}^\ast$-algebras (and thus so is any sub-$\mathrm{C}^\ast$-algebra $B\subset\mathrm{C}^\ast(D,\varphi(F))$ \cite[Proposition~IV.1.4.3]{Bla06}). In the separable case, if we were only interested in obtaining finiteness of the nuclear dimension of this $\mathrm{C}^\ast$-algebra, since $(D\subset \mathrm{C}^\ast(D,\varphi(F)))$ is a diagonal by \cite{BrownExelFullerPittsReznikoff21, BrownExelFullerPittsReznikoff21corrigendum}, at this point we could use the main result of \cite{Kum86} to obtain a locally compact Hausdorff twist $\Sigma$ over a principal, locally compact Hausdorff {\'e}tale groupoid $\mc{G}$ such that $\mathrm{C}^\ast(D,\varphi(F))\cong \mathrm{C}^\ast_\mathrm{r}(\mc{G},\Sigma)$. Applying \cite[Theorem~B]{BonickeLi24} we have that $\mathrm{C}^\ast(D,\varphi(F))$ has finite nuclear dimension (see also \cite{Win04}). It is, however, necessary for us to keep track of the precise value of the more refined \emph{diagonal} dimension of $(D\subset\mathrm{C}^\ast(D,\varphi(F)))$, as well as its behavior on pairs with intermediate sub-$\mathrm{C}^\ast$-algebras, which we do in the rest of this section.
\end{remark}

The following proposition will allow us to obtain the special partition of unity discussed in the introduction that will be necessary for the proof of \Cref{thm:subhom}. For the convenience of the reader, let us briefly explain its statement. For $N=1$, it states that in a $d$-dimensional metric space that contains some disjoint homeomorphic copies of an open set $O$ and a compact $L\subset O$, one can find a $(d+1)$-colorable partition of unity $\mc{P}$ such that the functions of $\mc{P}$ supported away from the copies of $L$ have supports of small diameter, and the rest of $\mc{P}$ are functions that are \emph{in sync} along the copies of $O$: they arise from a partition of unity for $L$ in $C_c(O)_+^1$, replicated along the homeomorphic copies of $O$. While the support of a function of this latter type does not necessarily have small diameter, its intersection with each copy of $O$ does. For $N>1$ the statement is completely analogous, only now one has to take into account $N$ open sets (and their accompanying compact subsets) and their multiple homeomorphic copies, all being pairwise disjoint.

For the following proof, recall that for a surjection $q\colon X\to Y$ and $U\subset X$, we say that $U$ is \emph{saturated} for $q$ when $q^{-1}(q(U))=U$, i.e.\ $U$ contains every preimage of a point in $q(U)$. When $q$ is a quotient map between topological spaces and $U\subset X$ is open and saturated, $q(U)$ is open in $Y$ \cite{Munkres00}.

\begin{proposition}\label{prop:POU}
Let $X$ be a compact metric space with $\dim X\eqqcolon d\in\bb{N}$, let $N\ge1$, let $O^{(n)}$ be locally compact metric spaces and let $L^{(n)}\subset O^{(n)}$ be compact subsets for each $n=1,\dots,N$. Let $r_1,\dots,r_N\ge 1$ and let
\begin{equation}
\iota_r^{(n)}\colon O^{(n)}\to X,\quad 1\le r\le r_n,\; 1\le n\le N
\end{equation}
be continuous, open embeddings with pairwise disjoint images in $X$.

Then, for any $\delta>0$ there are finite families $\{\tilde{h}_s^{(n)}\}_{s\in S^{(n)}}\subset C_c(O^{(n)})_+^1$ and $\{g_t\}_{t\in T}\subset C(X)_+^1$ and a map $c\colon T\sqcup\bigsqcup_{n=1}^NS^{(n)}\to\{0,\dots,d\}$ such that
\begin{enumerate}[label=\normalfont{(\roman*)}]
\item\label{item:lemmaPOW-2} For $1\le n\le N$ each $\{\tilde{h}_s^{(n)}\}_{s\in S^{(n)}}$ is a partition of unity for $L^{(n)}$ with supports of diameter less than $\delta$ and $c\vert_{S^{(n)}}$ is a coloring map for it.

\item\label{item:lemmaPOW-1} The support of each $g_t$ has diameter less than $\delta$.

\item\label{item:lemmaPOW-3} With $h_s^{(n)}\in C_0\big(\bigsqcup_{r=1}^{r_n}\iota_r^{(n)}(O^{(n)})\big)_+^1$ defined as 
\begin{equation}\label{eq:def-POW-altogether}
h_s^{(n)}(\iota_r^{(n)}(z))=\tilde{h}_s^{(n)}(z),\quad z\in O^{(n)},\; 1\le r\le r_n,\;1\le n\le N
\end{equation}
and $h_s^{(n)}=0$ on $X\setminus \bigsqcup_{r=1}^{r_n}\iota_r^{(n)}(O^{(n)})$, the family
\[
\{g_t\}_{t\in T}\sqcup\{h_s^{(n)}:s\in S^{(n)}, 1\le n\le N\}
\]
is a partition of unity for $X$ and $c$ is a coloring map for it.
\end{enumerate}
\end{proposition}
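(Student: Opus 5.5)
The plan is to produce the partition of unity on a quotient space in which the $r_n$ copies of a compact neighbourhood of $L^{(n)}$ are glued together. Pulling it back gives functions that are automatically \emph{in sync} along the copies. Pieces that are not needed in sync are then cut apart so that their supports become small.

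\textbf{Step 1: the quotient space.} For each $n$ choose compact sets $L^{(n)}\subset \mathrm{int}\,W_1^{(n)}\subset W_1^{(n)}\subset \mathrm{int}\,W^{(n)}\subset W^{(n)}\subset O^{(n)}$, using local compactness.
\begin{itemize}
\item Let $\sim$ be the equivalence relation on $X$ generated by $\iota_r^{(n)}(z)\sim\iota_{r'}^{(n)}(z)$ for $z\in W^{(n)}$.
\item Its classes are finite, and its graph is closed, because the $\iota_r^{(n)}(W^{(n)})$ are pairwise disjoint compact sets and the $\iota_r^{(n)}$ are homeomorphisms onto their images. Hence $Y\coloneqq X/\!\sim$ is a compact metrizable space and $q\colon X\to Y$ is a closed map.
\end{itemize}
Next I bound the dimension of $Y$.
\begin{itemize}
\item $Y$ is the union of the closed sets $q(\iota_1^{(n)}(W^{(n)}))\cong W^{(n)}$ and $q\big(X\setminus\bigcup_{r,n}\iota_r^{(n)}(\mathrm{int}\,W^{(n)})\big)$.
\item On the latter set $q$ is injective, so it is homeomorphic to a closed subset of $X$.
\item Each of these pieces therefore has dimension at most $d$, and the countable (here finite) sum theorem gives $\dim Y\le d$.
\end{itemize}

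\textbf{Step 2: a partition of unity on $Y$ and its pullback.}
\begin{itemize}
\item Apply Ostrand's theorem on $Y$ with a small parameter $\delta'>0$. This yields a $(d+1)$-colorable partition of unity $\{k_u\}_{u\in R}$ for $Y$ with coloring $c_0$ and supports of diameter less than $\delta'$.
\item The pullbacks $k_u\circ q$ form a partition of unity for $X$, and $c_0$ is still a coloring map, since orthogonality is preserved under pullback.
\end{itemize}
The main obstacle is the choice of $\delta'$. It must satisfy three requirements, each obtained from compactness (Lebesgue-number and uniform-continuity arguments).
\begin{enumerate}
\item[(a)] For any set $E\subset Y$ of diameter less than $\delta'$, the preimage $q^{-1}(E)$ decomposes as a disjoint union of relatively clopen pieces, each of diameter less than $\delta$ in $X$. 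Near the glued region, these pieces sit near distinct copies, which lie at positive mutual distance.
\item[(b)] Any such $E$ that meets $q(\iota_1^{(n)}(L^{(n)}))$ is contained in $q(\iota_1^{(n)}(\mathrm{int}\,W_1^{(n)}))$. This holds once $\delta'$ is below the distance from $q(\iota_1^{(n)}(L^{(n)}))$ to the complement of $q(\iota_1^{(n)}(\mathrm{int}\,W_1^{(n)}))$.
\item[(c)] The pieces lying in a copy $\iota_r^{(n)}(W_1^{(n)})$ correspond under $(\iota_r^{(n)})^{-1}$ to sets of diameter less than $\delta$ in $O^{(n)}$. This uses uniform continuity of the inverse maps on the compact sets $\iota_r^{(n)}(W^{(n)})$.
\end{enumerate}
I expect verifying (a) carefully near $q(\partial W^{(n)})$, where $q$ stops being locally injective, to be the fiddliest point.

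\textbf{Step 3: sorting the functions into the two families.}
\begin{itemize}
\item Let $S^{(n)}$ be the set of $u\in R$ with $\supp k_u\cap q(\iota_1^{(n)}(L^{(n)}))\neq\emptyset$. By (b), $\supp k_u\subset q(\iota_1^{(n)}(\mathrm{int}\,W_1^{(n)}))$ for these $u$.
\item For $s\in S^{(n)}$ set $\tilde h_s^{(n)}\coloneqq k_s\circ q\circ\iota_1^{(n)}\in C_c(O^{(n)})_+^1$.
\item By construction $k_s\circ q$ coincides with the function $h_s^{(n)}$ of \eqref{eq:def-POW-altogether}: it is supported in $\bigsqcup_r\iota_r^{(n)}(W_1^{(n)})$ and takes equal values along the copies.
\item The family $\{\tilde h_s^{(n)}\}_{s\in S^{(n)}}$ is a partition of unity for $L^{(n)}$, because the remaining $k_u$ vanish on $q(\iota_1^{(n)}(L^{(n)}))$. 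Its supports have diameter less than $\delta$ by (c), and it is colored by $c_0$.
\item For every other $u$, use (a) to write $k_u\circ q=\sum_j g_{u,j}$, with $g_{u,j}$ the restriction to the $j$-th clopen piece of the support. Each $g_{u,j}$ is continuous and has support of diameter less than $\delta$. Give each $g_{u,j}$ the color $c_0(u)$: pieces of the same $u$ have disjoint supports, so the coloring remains valid.
\item Let $T$ index all the $g_{u,j}$ and define $c$ accordingly.
\end{itemize}
Then \ref{item:lemmaPOW-2}, \ref{item:lemmaPOW-1} and \ref{item:lemmaPOW-3} follow. The sum over all functions is $\sum_u k_u\circ q=1$, and orthogonality within each color class is inherited from $\{k_u\}$.
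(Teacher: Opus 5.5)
Your route is the paper's: glue the $r_n$ copies of a compact neighbourhood of $L^{(n)}$, bound $\dim Y$, apply Ostrand on $Y$, pull back, keep the functions meeting $q(\iota_1^{(n)}(L^{(n)}))$ as the synchronized family, and cut the others into small pieces. However, the dimension bound in Step~1 rests on a false claim: $q$ is \emph{not} injective on $X\setminus\bigcup_{r,n}\iota_r^{(n)}(\mathrm{int}\,W^{(n)})$. Take $z\in W^{(n)}\setminus\mathrm{int}\,W^{(n)}$ (a nonempty set in general, e.g.\ whenever $O^{(n)}$ is connected and non-compact) and $r\neq r'$. Then the distinct points $\iota_r^{(n)}(z)$ and $\iota_{r'}^{(n)}(z)$ both lie in that set and are identified by $\sim$. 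This is not mere bookkeeping. Closed finite-to-one maps can raise covering dimension (binary expansion maps the Cantor set at most two-to-one onto $[0,1]$), so you genuinely need closed pieces on which $q$ is injective. The paper uses $C_{\bar r}\coloneqq X\setminus\bigcup_{n}\bigcup_{r\neq\bar r_n}\iota_r^{(n)}(O^{(n)})$ for $\bar r\in\prod_n\{1,\dots,r_n\}$. These sets are closed, they cover $X$, and $q$ is injective on each of them because only one copy of each glued region survives. Alternatively, pick compact $W'^{(n)}$ with $W^{(n)}\subset\mathrm{int}\,W'^{(n)}\subset W'^{(n)}\subset O^{(n)}$. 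Then cover $Y$ by the sets $q(\iota_r^{(n)}(W'^{(n)}))$, one copy at a time, together with $q\bigl(X\setminus\bigcup_{r,n}\iota_r^{(n)}(\mathrm{int}\,W'^{(n)})\bigr)$, which misses the glued region entirely.

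With that repaired, the rest is sound and differs from the paper only in how the smallness is organized. The paper refines a purpose-built open cover $\mathcal{U}$ of $Y$. It consists of the saturated sets $q(\hat{D}_z^{(n)})$ around $q(\iota_1^{(n)}(L^{(n)}))$, and of sets $G_y$ disjoint from all $q(\iota_1^{(n)}(L^{(n)}))$ whose preimages are finite disjoint unions of open sets of diameter less than $\delta$. Your (a)--(c) then hold by construction, and the choice $W\mapsto U_W$ does the sorting. Your single parameter $\delta'$ also works, but (a) needs the right argument. The naive uniform statement (fibres over nearby points are close) fails near $q(\iota_1^{(n)}(W^{(n)}\setminus\mathrm{int}\,W^{(n)}))$, where fibres drop from $r_n$ points to one. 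What does hold is the following: since $q$ is closed, every $y\in Y$ has a ball whose preimage lies in the $\epsilon$-neighbourhood of the finite fibre $q^{-1}(y)$, and a Lebesgue number for these balls gives (a). Choose $2\epsilon<\delta$ and $\epsilon$ below the distance from each $\iota_r^{(n)}(W^{(n)})$ to $X\setminus\iota_r^{(n)}(O^{(n)})$. Then each $q^{-1}(E)$ either lies in a single $\epsilon$-ball, or is split by the disjoint open sets $\iota_r^{(n)}(O^{(n)})$ into relatively clopen pieces, each of diameter at most $2\epsilon<\delta$.
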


\begin{proof}
Let $V_1^{(n)}, V_2^{(n)}\subset O^{(n)}$ be open sets with compact closures such that 
\[
L^{(n)}\subset V_1^{(n)}\subset \overline{V_1^{(n)}}\subset V_2^{(n)}\subset\overline{V_2^{(n)}}\subset O^{(n)}.
\]
Define a relation $\sim$ on $X$ by $x\sim x'$ if and only if $x=x'$ or there is $1\le n\le N$, $z\in \overline{V_2^{(n)}}$ and $1\le r, r'\le r_n$ such that $x=\iota_r^{(n)}(z)$ and $x'=\iota_{r'}^{(n)}(z)$. It is not hard to verify that $\sim$ is an equivalence relation on $X$. Note that each equivalence class of $\sim$ is a finite set (either a singleton, or a set with $r_1$ or $r_2$ or $\dots$ or $r_N$ elements) and thus a compact subset of $X$. Moreover, for $K\subset X$ closed we have that
\begin{equation}
\bigcup_{x\in K}[x]_\sim = K\cup \bigcup_{n=1}^N\bigcup_{r,s=1}^{r_n}\iota_r^{(n)}\bigg((\iota_s^{(n)})^{-1}\big(K\cap \iota_s^{(n)}\big(\overline{V_2^{(n)}}\big)\big)\bigg)
\end{equation}
which is closed, and so by \cite[Proposition~2.4.9]{Engelking89} and \cite[Theorem~4.2.13]{Engelking89} we have that the quotient space $Y\coloneqq X/\sim$ is a compact metrizable space. We claim that $\dim Y\le d$. To see this we will find a finite closed cover of $Y$ with sets of dimension at most $d$. For a tuple $\bar{r}\coloneqq \{\bar{r}_n\}_{n=1}^N\in\prod_{n=1}^N\{1,\dots, r_n\}$, put 
\begin{equation}
C_{\bar{r}}\coloneqq X\setminus\bigg(\bigcup_{n=1}^N\bigcup_{r\ne \bar{r}_n}\iota_r^{(n)}(O^{(n)})\bigg) \subset X
\end{equation}

and note that the sets $C_{\bar{r}}$ are closed (in particular, they all have dimension at most $d$ \cite[Theorem~3.1.4]{Engelking78}) and cover $X$. Moreover, $q$ is injective on each $C_{\bar{r}}$: indeed, let $x,x'\in C_{\bar{r}}$ be such that $q(x)=q(x')$. If $x\ne x'$, then there must exist some $1\le n\le N$ and some $1\le r,r'\le r_n$ such that for some $z\in \overline{V_2^{(n)}}$ we have that $x=\iota_r^{(n)}(z)$ and $x'=\iota_{r'}^{(n)}(z)$. Since $x,x'\in C_{\bar{r}}$, we must have that $r=r'=\bar{r}_n$ and thus $x=x'$, a contradiction. Since $q$ is injective on each $C_{\bar{r}}$ which is compact, the sets $q(C_{\bar{r}})$ are closed and homeomorphic to $C_{\bar{r}}$ and thus also have dimension at most $d$. Since these cover $Y$ by \cite[Theorem~3.1.8]{Engelking78} we have that $\dim Y$ is at most $d$.

We now set $F^{(n)}\coloneqq q(\iota_1^{(n)}(L^{(n)}))\subset Y$ for each $n=1,\dots, N$, which are compact subsets of $Y$ and note that $\{F^{(n)}\}_{n=1}^N$ are pairwise disjoint. 

Let $\delta>0$. For $z\in L^{(n)}$ choose an open neighborhood $D_z^{(n)}$ of $z$ with diameter less than $\delta$ and such that $\overline{D_z^{(n)}}\subset V_1^{(n)}$. Set
\begin{equation}
\hat{D}_z^{(n)}\coloneqq\bigsqcup_{r=1}^{r_n}\iota_r^{(n)}(D_z^{(n)}) \subset X,
\end{equation}
which is clearly (open and) saturated for $q$, so $q(\hat{D}_z^{(n)})$ is open in $Y$. 

For $y\in Y\setminus\bigsqcup_{n=1}^NF^{(n)}$, we claim that there is an open neighborhood $G_y\subset Y$ of $y$ such that $G_y$ is disjoint from $\bigsqcup_{n=1}^NF^{(n)}$ and $q^{-1}(G_y)$ is the disjoint union of finitely many open subsets of $X$ each of which has diameter less than $\delta$. Indeed, this is clearly the case if $y$ does not belong to $\bigsqcup_{n=1}^Nq\bigg(\iota_1^{(n)}(\overline{V_2^{(n)}})\bigg)$, since then $y$ has a unique preimage under $q$. If on the other hand $y=q(\iota_{1}^{(n)}(z))$ for some $z\in \overline{V_2^{(n)}}$ and $1\le n\le N$, then since $y\not\in F^{(n)}$ we have that $z\not\in L^{(n)}$. We can thus take an open neighborhood $\tilde{D}_z^{(n)}$ of $z$ with small enough diameter so that it does not intersect $L^{(n)}$ and each of the open sets $\iota_r^{(n)}(\tilde{D}_z^{(n)})$, $1\le r\le r_n$, has diameter less than $\delta$. Since $\bigsqcup_{r=1}^{r_n}\iota_r^{(n)}(\tilde{D}_z^{(n)})$ is (open and) saturated for $q$, setting $G_y\coloneqq q\big(\bigsqcup_{r=1}^{r_n}\iota_r^{(n)}(\tilde{D}_z^{(n)})\big)$ proves the claim.

Putting the two types of open neighborhoods together we obtain an open cover for $Y$, namely
\begin{equation}
\mathcal{U}\coloneqq \{q(\hat{D}_z^{(n)}):z\in L^{(n)}, 1\le n\le N\}\cup\bigg\{G_y: y\in Y\setminus\bigsqcup_{n=1}^NF^{(n)}\bigg\}.
\end{equation}
Since $Y$ is a compact metrizable space with $\dim Y\le d$, by \cite[Theorem~3.2.4]{Engelking78} there is a finite open cover $\mathcal{W}$ that refines $\mathcal{U}$ that is $(d+1)$-colorable, in the sense that there is a map $\hat{c}\colon \mathcal{W}\to \{0,\dots,d\}$ such that whenever $\hat{c}(W)=\hat{c}(W')$ we have that $W=W'$ or $W\cap W'= \emptyset$. For each $W\in\mathcal{W}$, pick $U_W\in\mathcal{U}$ such that $W\subset U_W$. Let $\{f_W\}_{W\in \mathcal{W}}\subset C(Y)_+^1$ be a partition of unity for $Y$ subordinate to $\mathcal{W}$. 

For $1\le n\le N$ set 
\[
S^{(n)}\coloneqq \big\{W\in\mathcal{W}: U_W=q(\hat{D}_z^{(n)})\text{ for some }z\in L^{(n)}\big\}
\]
and for $W\in S^{(n)}$ put $\tilde{h}_W^{(n)}\coloneqq f_W\circ q\circ \iota_1^{(n)}$. Note that since the closed support of $f_W$ is a subset of $U_W=q(\hat{D}_z^{(n)})$ (for some $z\in L^{(n)}$), the closed support of $\tilde{h}_W^{(n)}$ is a subset of $\overline{D_z^{(n)}}$ and in particular has diameter less than $\delta$ and is contained in a compact subset of $O^{(n)}$ (so $\tilde{h}_W^{(n)} \in C_c(O^{(n)})_+^1$). Note also that if $W\in\mc{W}$ is such that $W\not\in S^{(n)}$, then $U_W$ is disjoint from $F^{(n)}$, and so for $z\in L^{(n)}$ we have that $f_W(q(\iota_1^{(n)}(z)))=0$. Since $\{f_W\}_{W\in \mc{W}}$ is a partition of unity for $Y$, we obtain that $\sum_{W\in S^{(n)}}f_W(q(\iota_1^{(n)}(z)))=1$, and thus
\begin{equation}
\sum_{W\in S^{(n)}}\tilde{h}_W^{(n)}(z)=1.
\end{equation}
Modulo the statement about the coloring map (which we prove at the end), this establishes \ref{item:lemmaPOW-2}. 

For $1\le n\le N$ and $W\in S^{(n)}$, define $h_W^{(n)}\in C_0\big(\bigsqcup_{r=1}^{r_n}\iota_r^{(n)}(O^{(n)})\big)_+^1$ as in \eqref{eq:def-POW-altogether}, namely 
\begin{equation}
h_W^{(n)}(\iota_r^{(n)}(z))=\tilde{h}_W^{(n)}(z),\quad z\in O^{(n)},\; 1\le r\le r_n,\; 1\le n\le N
\end{equation}
and $h_W^{(n)}=0$ on $X\setminus\bigsqcup_{r=1}^{r_n}\iota_r^{(n)}(O^{(n)})$, and note that 
\begin{equation}\label{eq:notethistoo}
h_W^{(n)}=f_W\circ q.
\end{equation}
Indeed, both maps are $0$ on  $X\setminus \bigsqcup_{r=1}^{r_n}\iota_r^{(n)}(\overline{V_2^{(n)}})$, and if $x=\iota_r^{(n)}(z)$ for some $z\in \overline{V_2^{(n)}}$ and $1\le r\le r_n$, then $h_W^{(n)}(x)=f_W(q(\iota_1^{(n)}(z)))=f_W(q(\iota_r^{(n)}(z)))=f_W(q(x))$.

Now if $W\in\mc{W}\setminus\bigsqcup_{n=1}^NS^{(n)}$, then $U_W=G_y$ for some $y\in Y\setminus\bigsqcup_{n=1}^NF^{(n)}$, and we write $q^{-1}(U_W)=\bigsqcup_{j=1}^{m_W}D_j$ where $m_W\ge1$ and $D_j\subset X$ are open of diameter less than $\delta$.
Set
\begin{equation}
T\coloneqq\bigg\{(W,j): W\in\mc{W}\setminus\bigsqcup_{n=1}^NS^{(n)},\; 1\le j\le m_W\bigg\}
\end{equation}
and for $(W,j)\in T$ set
\begin{align*}
g_{(W,j)}(x)\coloneqq\begin{cases} f_W(q(x)),&\quad x\in D_j\\ 0,&\quad \text{else.}\end{cases}
\end{align*}
Note that $g_{(W,j)}$ is continuous: indeed, to prove this it suffices to see that $f_W(q(x))=0$ for $x\in\partial D_j$ and for that it is enough to see that $q(x)\not\in U_W$, i.e.\ that $x\not\in \bigsqcup_{i=1}^{m_W}D_i$. But indeed if $x\in D_i$ then $D_i\cap \partial D_j$ is nonempty and thus $D_i\cap D_j$ is nonempty, so $i=j$; but that contradicts the assumption that $x\in \partial D_j$. 
Also, the support of $g_{(W,j)}$ has diameter less than $\delta$, since $D_j$ does; this establishes \ref{item:lemmaPOW-1}. Note also that
\begin{equation}\label{eq:notethistooo}
\sum_{j=1}^{m_W}g_{(W,j)}=f_W\circ q.
\end{equation}
Putting together \eqref{eq:notethistoo} and \eqref{eq:notethistooo}, we see that $\{g_t\}_{t\in T}\sqcup\{h_W^{(n)}: W\in S^{(n)}, 1\le n\le N\}$ is a partition of unity for $X$. Finally, define the color map $c\colon T\sqcup\bigsqcup_{n=1}^NS^{(n)}\to\{0,\dots,d\}$ as $c(W,j)=\hat{c}(W)$ for $(W,j)\in T$ and $c(W)=\hat{c}(W)$ for $W\in\bigsqcup_{n=1}^NS^{(n)}$. Since for $j\ne j'$ we have that the supports of $g_{(W,j)}$ and $g_{(W,j')}$ are disjoint, $c$ is indeed a coloring map as we wanted.
\end{proof}

\begin{lemma}\label{lem:central-contraction}
Let A be a $\mathrm{C}^\ast$-algebra, and let $a\in A^1$, $b\in A$ and $z\in A_+^1$ be such that $a+b\in A^1$ and $z$ commutes with $a,b$. Then $a+zb\in A^1$.
\end{lemma}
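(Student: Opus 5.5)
The plan is to reduce to a statement about positive elements and then use the convexity of the closed unit ball. First pass to the $\mathrm{C}^\ast$-subalgebra generated by $a$, $b$, $z$ (and a unit, adjoining one if necessary); the norm does not change, and $z$ is central there only relative to $a,b$. Since $z$ commutes with $a$ and $b$, but not necessarily with $a^*$ and $b^*$, I will first record that $z$ also commutes with $a^*$ and $b^*$: taking adjoints of $za=az$ and using $z=z^*$ gives $a^*z=za^*$, and similarly for $b$. Consequently $z$, and hence $z^{1/2}$ and $(1-z)^{1/2}$, which are limits of polynomials in $z$, commute with $a,a^*,b,b^*$.

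Next, the key algebraic identity is
\[
a+zb = z(a+b) + (1-z)a ,
\]
which expresses $a+zb$ as a "convex combination with operator coefficients" of the two contractions $a+b$ and $a$. To turn this into a norm bound, I will write it as a row-times-column product. Set $x_1\coloneqq z^{1/2}$, $x_2\coloneqq(1-z)^{1/2}$, so that $x_1^2+x_2^2=1$. Using the commutation,
\[
a+zb = \begin{bmatrix} x_1 & x_2\end{bmatrix}\begin{bmatrix} a+b & 0\\ 0 & a\end{bmatrix}\begin{bmatrix} x_1\\ x_2\end{bmatrix},
\]
because $x_1(a+b)x_1 = z(a+b)$ and $x_2 a x_2=(1-z)a$ by commutativity. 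The row $R=[x_1\ x_2]$ satisfies $RR^*=x_1^2+x_2^2=1$, so $\|R\|\le1$; likewise the column $C=R^*$ has norm at most $1$. The middle diagonal matrix in $M_2(\tilde A)$ has norm $\max\{\|a+b\|,\|a\|\}\le1$. Hence $\|a+zb\|\le \|R\|\,\|\mathrm{diag}(a+b,a)\|\,\|C\|\le1$, and since $a+zb\in A$ we conclude $a+zb\in A^1$.

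The only delicate point is making sure the conjugation really reproduces $z(a+b)$ and $(1-z)a$, which requires that the square roots of $z$ and $1-z$ commute with $a$ and $b$; this is where the observation about adjoints and continuous functional calculus is used. Working in the unitization for $1-z$ is harmless because the resulting element lies in $A$. I expect no real obstacle beyond this bookkeeping.
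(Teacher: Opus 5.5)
Your argument is correct, but it takes a different route from the paper. The paper works with positive elements. After unitizing, it uses the operator inequality $(a+zb)^*(a+zb)\le (1-z)a^*a+z(a+b)^*(a+b)$, whose defect is $b^*(z-z^2)b\ge 0$, and then bounds
$(1-z)^{1/2}a^*a(1-z)^{1/2}+z^{1/2}(a+b)^*(a+b)z^{1/2}\le (1-z)+z=1$.

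You instead realize the identity $a+zb=z(a+b)+(1-z)a$ as an operator-valued convex combination. That is, $a+zb$ is the image of the contraction $\mathrm{diag}(a+b,a)$ under the compression $X\mapsto RXR^*$, which is unital completely positive because $RR^*=1$.

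Your route is slightly more conceptual and needs less. It only uses that $z^{1/2}$ and $(1-z)^{1/2}$ commute with $a+b$ and with $a$, and this follows from $z$ commuting with $a$ and $b$ by polynomial approximation. So your observation that $z$ commutes with $a^*$ and $b^*$ is never actually used, despite your closing remark, and the passage to $\mathrm{C}^\ast(a,b,z)$ is also unnecessary.

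The paper's expansion of $(a+zb)^*(a+zb)$ does genuinely use commutation of $z$ with $a^*$ and $b^*$. This is automatic since $z=z^*$. In exchange, the paper's argument stays within order inequalities in $A$ itself, with no matrix amplification.
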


\begin{proof}
Assume without loss of generality that $A$ is unital, and note that $0\le (1_A-z)a^*a+z(a+b)^*(a+b) - (a+zb)^*(a+zb)$, so
\begin{align*}
(a+zb)^*(a+zb)&\le (1_A-z)a^*a + z(a+b)^*(a+b)\\
&=(1_A-z)^\frac{1}{2}a^*a(1-z)^\frac{1}{2}+z^\frac{1}{2}(a+b)^*(a+b)z^\frac{1}{2}\\
&\le (1_A-z)+z=1_A
\end{align*}
as we wanted.
\end{proof}

\begin{theorem}\label{thm:subhom}
Let $(D\subset A)$ be a $\mathrm{C}^\ast$-diagonal with $D$ separable, let $(D_F\subset F)$ be a masa in a finite-dimensional $\mathrm{C}^\ast$-algebra, and let $\varphi\colon F\to A$ be a linear c.p.c.\ order zero map such that $\varphi(\mc{N}_F(D_F))\subset\mc{N}_A(D)$. Then, for any $\mathrm{C}^\ast$-algebra $D\subset B\subset \mathrm{C}^\ast(D,\varphi(F))$, we have that
\begin{equation}
\dim_{\mathrm{diag}}(D\subset B)=\dim\widehat{D}.
\end{equation}
\end{theorem}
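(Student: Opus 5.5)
We prove the two inequalities separately. The lower bound $\dim\widehat{D}\le\dim_{\mathrm{diag}}(D\subset B)$ should follow from the general fact in \cite{LiLiaWin23} that the diagonal dimension of a pair dominates the covering dimension of the spectrum of the diagonal. One compresses a diagonal-dimension approximation to $D$ using $\Phi$ and the fact $\varphi^{(i)}(D_{F^{(i)}})\subset D$; the pair $(D\subset B)$ is a $\mathrm{C}^\ast$-diagonal by \cite{BrownExelFullerPittsReznikoff21}. So the real work is the upper bound $\dim_\mathrm{diag}(D\subset B)\le d\coloneqq\dim\widehat{D}$.

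\textbf{Reduction.} First pass to the unital case by unitizing, as in the proof of \Cref{prop:cond-exps}. Fix a finite set $\mathcal{F}\Subset B$ and $\varepsilon>0$. Let $J=\langle\varphi(1_F)\rangle$ and use \Cref{cor:ideal-subhom} to identify $B\cap J$ with the coordinate description \eqref{eq:bcapj}, with open sets $U^{(n)}$ and $V^{(n)}_{ij}$. Recall from the proof of \Cref{lem:ideal-matrix-subhom} that $U^{(n)}$ sits in $\widehat D$ as $r_n$ disjoint open copies $\iota^{(n)}_r(U^{(n)})$, namely the supports of the $\varphi(e^{(n)}_{rr})$. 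Write $\varphi(1_F)=\sum_n\varphi(1_{M_{r_n}})$. For small $\gamma>0$, a continuous cut-off $k_\gamma(\varphi(1_F))$ that equals $1$ where $\varphi(1_F)\ge\gamma$ lets us decompose each $b\in\mathcal{F}$ approximately as $b\approx b'+b''$. Here $b'=k_\gamma b\in B\cap J$ lives over compact sets $L^{(n)}\subset U^{(n)}$ (the points where $\|\varphi(e_{11}^{(n)})\|\ge\gamma$), and $b''$ is close to an element of $D$. This uses $\Psi$-type arguments, or simply that modulo $J$ everything lies in $D$: by \Cref{lem:dense-subspace} the off-diagonal part of $b$ is in $J$ and is small where $\varphi(1_F)<\gamma$.

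\textbf{Construction of the approximation.} Apply \Cref{prop:POU} with $X=\widehat D$ (after compactifying if necessary; $D$ is separable, so $X$ is metrizable), $O^{(n)}=U^{(n)}$, the compact sets $L^{(n)}$, the embeddings $\iota^{(n)}_r$, and $\delta$ chosen by uniform continuity of the matrix coefficients of the elements of $\mathcal{F}$. This produces a $(d+1)$-colored partition of unity $\{g_t\}\sqcup\{h^{(n)}_s\}$. For each $s\in S^{(n)}$, define an equivalence relation on $\{1,\dots,r_n\}$ by $i\sim_s j$ iff $\supp\tilde h^{(n)}_s\subset V^{(n)}_{ij}$. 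Conditions (i)–(iii) of \Cref{cor:ideal-subhom} make this an equivalence relation. Let $F_s=\bigoplus_{\text{classes }\kappa}M_{\kappa}$, with masa given by the diagonal, and define an order zero map $F_s\to B$ by $e_{ij}\mapsto \tilde h_s^{(n)}\otimes e^{(n)}_{ij}$ for $i\sim_s j$. Its range lies in $B$ precisely because of how $\sim_s$ was defined. It is normalizer-preserving, since matrix units go to elements of the form $f\otimes e_{ij}$, which are normalizers of $C_0(U)\otimes D_{r_n}$, and hence of $D$ after checking that $D$ acts diagonally. For each $t$ take $F_t=\mathbb C$ with $1\mapsto g_t\in D$. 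Group the summands by color to obtain $d+1$ order zero maps. Define $\psi$ on the summand $F_s$ by evaluating the matrix entries $(b_{ij})_{i\sim_s j}$ at a chosen point $x_s\in\supp\tilde h^{(n)}_s$, and on the summand for $t$ by evaluating $\Phi(b)$ at a point of $\supp g_t$. This $\psi$ is c.p.c.: it is a compression of a point evaluation composed with a $^*$-homomorphism, and on $D$ it lands in the diagonal masa. Then $\varphi\psi(b)\approx b$, because entries across inequivalent indices $i\not\sim_s j$ vanish at the relevant points, or are small there after shrinking $\delta$ relative to the supports. The partition of unity reassembles $b$ within $\varepsilon$ using the small-diameter supports.

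\textbf{Main obstacle.} The delicate step is handling entries $b_{ij}$ with $i\not\sim_s j$. We know $\supp\tilde h_s\not\subset V_{ij}$, yet $b_{ij}$ may be nonzero on part of $\supp\tilde h_s$. Since $b_{ij}\in C_0(V_{ij})$ and $\supp\tilde h_s$ meets the complement of $V_{ij}$, uniform continuity together with the smallness of $\delta$ gives $|b_{ij}|<\varepsilon'$ on $\supp\tilde h_s$, so dropping these entries costs little. Contractivity of $\psi$, and of the sum when restricting matrices to block-diagonal pieces, is where \Cref{lem:central-contraction} enters: it is used to combine the $D$-part $g_t$-terms with the $J$-part coherently.
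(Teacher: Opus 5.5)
Your proposal has the same skeleton as the paper's proof. It uses the coordinates for $B\cap J$ from \Cref{cor:ideal-subhom}, the synchronized partition of unity from \Cref{prop:POU}, the relation $i\sim_s j\iff\mathrm{supp}(\tilde h_s^{(n)})\subset V_{ij}^{(n)}$, the corners $M_Q$ with the maps $x\mapsto\tilde h_s^{(n)}\otimes x$ landing in $B$, and grouping by colour. Where you deviate, the deviations are sound and in places simpler.

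\emph{The splitting of $b$.} The paper gets $\Psi(b)=d_b+e_b$ from \Cref{prop:cond-exps} and then truncates $e_b$ with a central $z$. Your cut-off $k_\gamma(\varphi(1_F))\in D\cap J$ gives $b'=k_\gamma b\in B\cap J$ directly. \Cref{lem:dense-subspace} then controls the rest: if $b$ is within $\eta$ of $d_0+\sum a^{(n)}_{ij}\varphi(e^{(n)}_{ij})$, then $(1-k_\gamma)b$ lies within $\eta+\gamma\sum_{i\neq j}\|a^{(n)}_{ij}\|$ of $D$, since $\|(1-k_\gamma(\varphi(1_F)))\varphi(e^{(n)}_{ij})\|\le\gamma$. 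You just need to choose $\gamma$ after the coefficients.

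\emph{The approximation scheme.} The paper verifies the Li--Liao--Winter criterion with contractions $x_b^{(c)}$ built from the matrices $m_b^{(n)}(y)$; that is exactly where \Cref{lem:central-contraction} is needed. Your $\psi$ is the $^*$-homomorphism $\mathrm{C}^\ast(D,\varphi(F))\to\mathcal{M}(J)\to M_{r_n}$ evaluating at $x_s$, compressed to $\bigoplus_Q M_Q$, together with the states $b\mapsto\Phi(b)(y_t)$. This is automatically c.p.c.\ with $\psi(D)\subset D_F$. So you can check \Cref{def:dimdiag} directly, and \Cref{lem:central-contraction} plays no role; your last sentence about where it enters is misplaced.

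\emph{Off-block entries.} For entries with $i\not\sim_s j$, the paper gets exact vanishing by requiring the $\delta$-neighbourhood of $K^{(n)}_{ij}$ to lie in $V^{(n)}_{ij}$. Your argument is an equally valid approximate version: the entry of $b'$ vanishes at a point of $\mathrm{supp}(\tilde h_s^{(n)})\setminus V^{(n)}_{ij}$, hence is uniformly small on that support.

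\emph{Lower bound.} You need neither $\Phi$ nor Brown--Exel--Fuller--Pitts--Reznikoff. Restricting $\psi$ and $\varphi$ to $D$ and $D_F$ already gives $\dim\widehat D=\dim_{\mathrm{nuc}}D\le\dim_{\mathrm{diag}}(D\subset B)$.

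One step needs repair: taking $O^{(n)}=U^{(n)}$ and choosing $\delta$ ``by uniform continuity of the matrix coefficients''. The entries of $b'$ are compactly supported, so they are fine. The trouble is the diagonal coefficients coming from $b''$, i.e.\ $y\mapsto d''(\bar\iota^{(n)}_r(y))$ for the approximant $d''\in D$. These are only bounded and continuous on $U^{(n)}$, not uniformly continuous for the metric transported from the first copy.
\begin{itemize}
\item \Cref{prop:POU} controls only the diameter of $\mathrm{supp}(\tilde h^{(n)}_s)$ in that metric. So the other copies $\bar\iota^{(n)}_r(\mathrm{supp}(\tilde h^{(n)}_s))$ may be large, and $h^{(n)}_s b''$ need not be close to $\tilde h^{(n)}_s\otimes\mathrm{diag}(d''(x_s,1),\dots,d''(x_s,r_n))$.
\item You also cannot first fix a compact set containing all the supports, because the partition of unity is produced after $\delta$ is chosen.
\end{itemize}
The remedy is the paper's. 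Choose $O^{(n)}$ open with $L^{(n)}\subset O^{(n)}$ and $\overline{O^{(n)}}\subset U^{(n)}$ compact, as in \eqref{eq:subsets-chain}. Then require uniform continuity only on $\overline{O^{(n)}}$, as in the paper's conditions (c) and (d) on $\delta$. With that change your argument goes through.
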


\begin{proof}
If $\varphi=0$ then there is nothing to show, so we assume that $\varphi\ne0$. We first handle the unital case, in which $D\cong C(X)$ for some compact metric space $X$. Write $F= \bigoplus_{n=1}^NM_{r_n}$ and since all masas in finite-dimensional $\mathrm{C}^\ast$-algebras are conjugate, we can assume without loss of generality that $D_F = \bigoplus_{n=1}^ND_{r_n}$. Let $D\subset B\subset \mathrm{C}^\ast(D,\varphi(F))$ be an intermediate $\mathrm{C}^\ast$-algebra. Set $J\coloneqq\langle\varphi(1_F)\rangle\trianglelefteq\mathrm{C}^\ast(D,\varphi(F))$. Throughout the proof we will make repeated use of the identification of $J$ with $\bigoplus_{n=1}^NC_0(U^{(n)})\otimes M_{r_n}$ that was explicitly\footnote{To be more explicit, the proof of \Cref{lem:ideal-matrix-subhom} shows that $U^{(n)}\subset X$ is obtained as the open support of $\varphi(e_{11}^{(n)})$, and so it is identified with $U^{(n)}\times\{1\}$; in particular it is a locally compact metric space in its own right with the metric that $U^{(n)}\times\{1\}$ inherits from $X$ as its subspace.} described in \Cref{cor:ideal-subhom}, so we will treat this direct sum as a subalgebra of $\mathrm{C}^\ast(D,\varphi(F))$ (i.e.\ as $J$). We will also identify $B\cap J$ with the direct sum appearing in \eqref{eq:bcapj} with open sets $V_{ij}^{(n)}\subset U^{(n)}$ as in the second part of the statement of \Cref{cor:ideal-subhom}. Also, upon discarding matrix summands of $F$ on which $\varphi$ is zero (if any), we can assume without loss of generality that all the sets $U^{(n)}\subset X$ are nonempty. With this identification at hand, we have an open subset
\begin{equation}\label{eq:opensubset}
\bigsqcup_{n=1}^N\big(U^{(n)}\times\{1,\dots,r_n\}\big)\subset X
\end{equation}
which can be viewed as the spectrum of $D\cap J$. Consider also the open embeddings $\bar\iota_r^{(n)}\colon U^{(n)}\to X$ defined as 
\begin{equation}\label{eq:embeddings}
\bar\iota_r^{(n)}(x)=(x,r),\quad x\in U^{(n)},\; 1\le r\le r_n,\; 1\le n\le N.
\end{equation}
We remark that an element $d\in D$ belongs to $C_0(U^{(n)})\otimes e_{rr}^{(n)}$ precisely when its open support is contained in $\bar\iota_r^{(n)}(U^{(n)})=U^{(n)}\times\{r\}$. In this case, $d$ is identified with $(d\circ\bar\iota_r^{(n)})\otimes e_{rr}^{(n)}$. On the other hand, given $f\in C_0(U^{(n)})$, the elementary tensor $f\otimes e_{rr}^{(n)}$ is identified with the function in $C(X)$ defined to be zero everywhere outside $\bar\iota_r^{(n)}(U^{(n)})$ and defined to be $f(y)$ on the point $\bar\iota_r^{(n)}(y)\in X$.

Since $d\coloneqq \dim X=\dim_\mathrm{nuc}(D)\le \dim_\mathrm{diag}(D\subset B)$ by \cite[Proposition~2.4]{WinZac10} and \cite[Remarks~2.2]{LiLiaWin23}, it suffices to show the reverse inequality, which we shall establish by verifying the criterion in \cite[Proposition~2.3]{LiLiaWin23}. To that end let $\varepsilon>0$ and let $\mc{F}\Subset B^1$ (since we are in the unital case, we take $1_A$ as $h$ in \cite[Proposition~2.3]{LiLiaWin23}). If $d=\infty$ there is nothing to show so assume that $d\in\mathbb{N}$. Let $0<\eta<(d+1)^{-1}\cdot (\max_{1\le n\le N}r_n^2 + 5)^{-1}\cdot \varepsilon$ and apply \Cref{prop:cond-exps} with this $\eta>0$ and this $\mathcal{F}$.  By \eqref{eq:cond-exp-def}, each $\Psi(b)$ is of the form $\Psi(b) =  d_b + e_b$ where $d_b\in D^1$ and $e_b\in (B\cap J)^1$. Let $z\in (\bigoplus_{n=1}^N C_c(U^{(n)})\otimes 1_{r_n})_+^1\subset D\cap J$ be such that $\|ze_b-e_b\|<\eta$ for all $b\in\mathcal{F}$. Note also that $z$ is central in $\mathrm{C}^\ast(D,\varphi(F))$, and so by \Cref{lem:central-contraction} $d_b+ze_b$ are contractions with 
\begin{equation}
\|d_b+ze_b - b\|<2\eta,\quad b\in\mathcal{F}.
\end{equation}
Moreover, $ze_b$ is a contraction in $ (B\cap J)\cap \bigoplus_{n=1}^NC_c(U^{(n)})\otimes M_{r_n}$, and so $ze_b$ is a contraction in
\begin{equation*}
\bigoplus_{n=1}^N\mathrm{span}\{f\otimes e_{ij}^{(n)}: f\in C_c(V_{ij}^{(n)}),\; 1\le i,j\le r_n\}.
\end{equation*}
For each $b\in\mc{F}$ write $ze_b = \bigoplus_{n=1}^N[b_{ij}^{(n)}]_{i,j=1}^{r_n}$ with $b_{ij}^{(n)}\in C_c(V_{ij}^{(n)})^1$ and take compact sets $K_{ij}^{(n)}\subset V_{ij}^{(n)}$ such that $\bigcup_{b\in\mathcal{F}}\mathrm{supp}(b_{ij}^{(n)})\subset K_{ij}^{(n)}$. 

Let $K^{(n)}\coloneqq\bigcup_{i,j=1}^{r_n}K_{ij}^{(n)} \subset U^{(n)}$; while some of the sets $V_{ij}^{(n)}$ (and thus also some of $K_{ij}^{(n)}$) may be empty, we can always find nonempty open sets $W^{(n)}$ and $O^{(n)}$ with compact closures such that 
\begin{equation}\label{eq:subsets-chain}
K^{(n)}\subset W^{(n)} \subset\overline{W^{(n)}}\subset O^{(n)}\subset \overline{O^{(n)}}\subset U^{(n)}.
\end{equation}
For $1\le n\le N$, $b\in\mc{F}$ and $y\in U^{(n)}$, set
\begin{equation}\label{eq:mb}
m_b^{(n)}(y)\coloneqq\mathrm{diag}(d_b(y,1),\dots,d_b(y,r_n))+ [b_{ij}^{(n)}(y)]_{i,j=1}^{r_n}\in M_{r_n}.
\end{equation}
Note that each $m_b^{(n)}(y)$ is a contraction in $M_{r_n}$. Indeed, for $y\in U^{(n)}$ let $a\in C_c(U^{(n)})_+^1$ be such that $a(y)=1$ and consider $(a\otimes 1_{r_n})\cdot (d_b+ze_b)\cdot (a\otimes 1_{r_n})$ which is a contraction in $C_0(U^{(n)})\otimes M_{r_n}$; viewing this as an $M_{r_n}$-valued function on $U^{(n)}$ and evaluating at $y$ yields a contraction in $M_{r_n}$ which can be directly seen to be equal to $m_b^{(n)}(y)$, using our remark after \eqref{eq:embeddings}.

Choose $\delta>0$ small enough so that all of the following hold:
\begin{enumerate}[label=(\alph*)]
\item\label{item:delta-a}  For each $1\le n\le N$, $1\le i,j\le r_n$, whenever $K_{ij}^{(n)}$ is nonempty, the $\delta$-neighborhood of $K_{ij}^{(n)}$ (that is, the set of points at distance less than $\delta$ from $K_{ij}^{(n)}$) is contained in $V_{ij}^{(n)}$.
\item\label{item:delta-b} If $y_1,y_2\in X$ are at distance at most $\delta$, then $|d_b(y_1)-d_b(y_2)|<\eta$ for all $b\in\mc{F}$.
\item\label{item:delta-c} For each $1\le n\le N$, if $y_1,y_2\in \overline{O^{(n)}}$ are at distance at most $\delta$, then $|d_b(y_1,r)-d_b(y_2,r)|<\eta$ for all $1\le r\le r_n$ and $b\in\mc{F}$.
\item\label{item:delta-d} For each $1\le n\le N$, if $y_1,y_2\in \overline{O^{(n)}}$ are at distance at most $\delta$, then  $|b_{ij}^{(n)}(y_1)-b_{ij}^{(n)}(y_2)|<\eta$ for all $b\in\mc{F}$, $i,j=1,\dots,r_n$.
\end{enumerate}
Apply \Cref{prop:POU} with the open embeddings $\iota_r^{(n)}\coloneqq \bar\iota_r^{(n)}\vert_{O^{(n)}}$ and $\overline{W^{(n)}}$ as the sets $L^{(n)}$ in that statement. For this $\delta>0$ we obtain finite families $\{\tilde{h}_s^{(n)}\}_{s\in S^{(n)}}\subset C_c(O^{(n)})_+^1$, $n=1,\dots N$, and $\{g_t\}_{t\in T}\subset C(X)_+^1$ together with a map $c\colon T\sqcup\bigsqcup_{n=1}^NS^{(n)}\to\{0,\dots,d\}$ satisfying \ref{item:lemmaPOW-2}--\ref{item:lemmaPOW-3} as in \Cref{prop:POU}. Moreover, by discarding those functions $g_t$ or $\tilde{h}_s^{(n)}$ that are identically zero, we can assume without loss of generality that the support of each $g_t$ and each $\tilde{h}_s^{(n)}$ (and thus of each $h_s^{(n)}$ as well) is nonempty.

For $s\in S^{(n)}$ let $\sim_s^{(n)}$ be the relation on $\{1,\dots,r_n\}$ defined by
\begin{equation}\label{eq:equivalence}
i\sim_s^{(n)}j\iff \supp(\tilde{h}_s^{(n)})\subset V_{ij}^{(n)},\quad 1\le i,j\le r_n.
\end{equation}
Since $V_{ii}^{(n)}=U^{(n)}$ and $V_{ij}^{(n)}=V_{ji}^{(n)}$ by \ref{item:setsVij-i} and \ref{item:setsVij-ii} of \Cref{cor:ideal-subhom} respectively, $\sim_s^{(n)}$ is symmetric and reflexive. Now if $i\sim_s^{(n)}j\sim_s^{(n)}k$, then using \ref{item:setsVij-iii} of \Cref{cor:ideal-subhom} we have that
\begin{equation}
\supp(\tilde{h}_s^{(n)})\subset V_{ij}^{(n)}\cap V_{jk}^{(n)}\subset V_{ik}^{(n)}
\end{equation}
so $i\sim_s^{(n)}k$ and thus $\sim_s^{(n)}$ is also transitive and hence an equivalence relation.

Let $Q_s^{(n)}$ denote the finite set of equivalence classes of $\sim_s^{(n)}$, and for $Q\in Q_s^{(n)}$ let $p_Q\coloneqq\sum_{i\in Q}e_{ii}^{(n)}\in M_{r_n}$ and set
\begin{equation}\label{eq:M_Q}
M_Q\coloneqq p_QM_{r_n}p_Q\subset M_{r_n}, \text{ and } D_Q\coloneqq p_QD_{r_n}p_Q\subset D_{r_n}.
\end{equation}
For $t\in T$ we define c.p.\ linear maps $\vartheta_t\colon\bb{C}\to D$ given by
\begin{equation}\label{eq:theta-t}
\vartheta_t(\lambda)\coloneqq \lambda g_t,\quad \lambda\in\bb{C}.
\end{equation}
For $1\le n\le N$, $s\in S^{(n)}$ and $Q\in Q_s^{(n)}$ we define c.p.\ linear maps $ \vartheta_{s,Q}^{(n)}\colon M_Q\to J$ given by
\begin{equation}\label{eq:theta-s-q-n}
\vartheta_{s,Q}^{(n)}(x)\coloneqq\tilde{h}_s^{(n)}\otimes x,\quad x\in M_Q.
\end{equation}
Note that each $\vartheta_{s,Q}^{(n)}$ in fact takes values in $B\cap J$. Indeed, if $a=[\lambda_{ij}]_{i,j=1}^{r_n}\in M_Q$ and $i,j$ are such that $\lambda_{ij}\ne0$, then $i,j\in Q$ so $i\sim_s^{(n)}j$, and thus $\lambda_{ij}\tilde{h}_s^{(n)}\in C_0(V_{ij}^{(n)})$ by \eqref{eq:equivalence}, which shows that $\vartheta_{s,Q}^{(n)}(a)\in B\cap J$ by \eqref{eq:bcapj}.

For $0\le c\le d$ let $S_c^{(n)}=\{s\in S^{(n)}: c(s)=c\}$ and let $T_c=\{t\in T: c(t)=c\}$. Consider the finite-dimensional $\mathrm{C}^\ast$-algebra
\begin{equation}\label{eq:Ec}
E^{(c)}\coloneqq \bigoplus_{t\in T_c}\bb{C} \oplus \bigoplus_{n=1}^N\bigoplus_{s\in S^{(n)}_c}\bigoplus_{Q\in Q_s^{(n)}}M_Q,
\end{equation}
and the masa $(D_{E^{(c)}}\subset E^{(c)})$ defined as
\begin{equation}\label{eq:Dc}
D_{E^{(c)}}\coloneqq \bigoplus_{t\in T_c}\bb{C}\oplus\bigoplus_{n=1}^N\bigoplus_{s\in S^{(n)}_c}\bigoplus_{Q\in Q_s^{(n)}}D_Q.
\end{equation}
Let $\vartheta^{(c)}\colon E^{(c)}\to B$ be the c.p.\ linear map defined as
\begin{equation}\label{eq:theta-c}
\vartheta^{(c)}=\sum_{t\in T_c}\vartheta_t+\sum_{n=1}^N\sum_{s\in S_c^{(n)}}\sum_{Q\in Q_s^{(n)}}\vartheta_{s,Q}^{(n)}.
\end{equation}
We first claim that $\vartheta^{(c)}$ is contractive and order zero. Indeed, it is clear by \eqref{eq:theta-t} and \eqref{eq:theta-s-q-n} that each of the maps $\vartheta_t$ and $\vartheta_{s,Q}^{(n)}$ are order zero. Observe also that the maps $\{\vartheta_t\}_{t\in T_c}\sqcup\{\vartheta_{s,Q}^{(n)}: Q\in Q_s^{(n)},\; s\in S_c^{(n)},\; 1\le n\le N\}$  have pairwise orthogonal ranges, and so $\vartheta^{(c)}$ is order zero. Moreover, we have that
\begin{align}\label{eq:ev-at-1}
\vartheta^{(c)}(1_{E^{(c)}}) &=\sum_{t\in T_c}g_t + \sum_{n=1}^N\sum_{s\in S_c^{(n)}}\sum_{Q\in Q_s^{(n)}}\tilde{h}_s^{(n)}\otimes p_Q\\
&=\sum_{t\in T_c}g_t + \sum_{n=1}^N\sum_{s\in S_c^{(n)}}\tilde{h}_s^{(n)}\otimes 1_{M_{r_n}}\nonumber\\
&=\sum_{t\in T_c}g_t+\sum_{n=1}^N\sum_{s\in S_c^{(n)}}h_s^{(n)}\nonumber
\end{align}
which is a contraction as a sum of pairwise orthogonal contractions. This shows that $\vartheta^{(c)}$ is contractive. 

We claim that $\vartheta^{(c)}(\mathcal{N}_{E^{(c)}}(D_{E^{(c)}}))\subset\mathcal{N}_A(D)$. Indeed, by \cite[Lemma~1.4]{LiaTik22}
it suffices to see that, individually, each of the maps $\vartheta_t$ and $\vartheta_{s,Q}^{(n)}$ preserve normalizers, since all such maps appearing in the right side of \eqref{eq:theta-c} have pairwise orthogonal ranges. For $\vartheta_t$ this is immediate since its range lies in $D$. For $\vartheta_{s,Q}^{(n)}$, it follows by \eqref{eq:theta-s-q-n} and the identification \eqref{eq:identification-j} that $\vartheta_{s,Q}^{(n)}(\mc{N}_{M_Q}(D_Q))\subset\mc{N}_J(D\cap J)$, and since $D\cap J$ contains an approximate unit for $J$, we have $\mc{N}_J(D\cap J)\subset\mc{N}_A(D)$.

We now turn to the necessary approximations. Set $x_{1_A}^{(c)}\coloneqq 1_{E^{(c)}}$ and $x_{1_A}\coloneqq\bigoplus_{c=0}^dx_{1_A}^{(c)}$. We will show that for each $b\in\mc{F}$ there exist contractions $x_b^{(c)}\in (E^{(c)})^1$ such that, with
\begin{equation}\label{eq:xb}
x_b\coloneqq\bigoplus_{c=0}^dx_b^{(c)}\in \bigg(\bigoplus_{c=0}^dE^{(c)}\bigg)^1,\quad b\in\mc{F},
\end{equation}
we have that
\begin{equation}\label{eq:wanted1-sum-approx}
\bigg\|\sum_{c=0}^d\vartheta^{(c)}(x_b^{(c)})-b\bigg\|<\varepsilon,\quad b\in\mc{F}\cup\{1_A\}
\end{equation}
and
\begin{equation}\label{eq:wanted2-summand-approx}
\|\vartheta^{(c)}(x_{1_A}^{(c)})\cdot b-\vartheta^{(c)}(x_b^{(c)})\|<\varepsilon/(d+1),\quad 0\le c\le d,\; b\in\mc{F}.
\end{equation}
In fact, by \eqref{eq:ev-at-1} and the fact that $\{g_t\}_{t\in T}\sqcup\{h_s^{(n)}:s\in S^{(n)},\; 1\le n\le N\}$ is a partition of unity for $X$, we see that $\sum_{c=0}^d\vartheta^{(c)}(x_{1_A}^{(c)})=1_A$, and so it suffices to only verify \eqref{eq:wanted2-summand-approx}, since \eqref{eq:wanted1-sum-approx} follows by \eqref{eq:wanted2-summand-approx} and a triangle inequality.

To that end, for $t\in T$ take $y_t\in \supp(g_t)$ and for $1\le n\le N$ and $s\in S^{(n)}$ take $y_s^{(n)}\in \supp(\tilde{h}_s^{(n)})$. For $b\in\mc{F}$ we define
\begin{equation}\label{eq:xbc}
x_b^{(c)}\coloneqq \bigoplus_{t\in T_c}d_b(y_t)\oplus\bigoplus_{n=1}^N\bigoplus_{s\in S_c^{(n)}}\bigoplus_{Q\in Q_s^{(n)}}p_Q\cdot m_b^{(n)}(y_s^{(n)})\cdot p_Q\in E^{(c)}
\end{equation}
which are contractions, since $d_b\in D^1$ and $m_b^{(n)}(y)$ is a contraction for all $y\in U^{(n)}$ and all $1\le n\le N$. In order to verify \eqref{eq:wanted2-summand-approx}, note first that
\begin{equation}\label{eq:gtzeb=0}
g_t\perp ze_b,\quad t\in T,\; b\in\mc{F}.
\end{equation}
Indeed, we have that $ze_b=\bigoplus_{n=1}^N[b_{ij}^{(n)}]_{i,j=1}^{r_n}$ and $\supp(b_{ij}^{(n)})\subset K^{(n)}$. Now 
\begin{align*}
\sum_{n=1}^N\sum_{s\in S^{(n)}}(\tilde{h}_s^{(n)}\otimes 1_{r_n})\cdot ze_b &= \bigoplus_{n=1}^N\bigg[\sum_{s\in S^{(n)}}\tilde{h}_s^{(n)}b_{ij}^{(n)}\bigg]_{i,j=1}^{r_n}\\
&=\bigoplus_{n=1}^N[b_{ij}^{(n)}]_{i,j=1}^{r_n}\\
&=ze_b
\end{align*}
where we used the fact that $\{\tilde{h}_s^{(n)}\}_{s\in S^{(n)}}$ is a partition of unity for $\overline{W^{(n)}}\supset K^{(n)}$. Likewise we have that $ze_b\cdot \sum_{n=1}^N\sum_{s\in S^{(n)}}(\tilde{h}_s^{(n)}\otimes 1_{r_n})=ze_b$, and thus $\sum_{t\in T}g_tze_b=ze_b\sum_{t\in T}g_t=0$. It follows that $g_tze_b=ze_bg_t=0$ for all $t\in T$. Now using \eqref{eq:ev-at-1} and \eqref{eq:gtzeb=0} we have that
\begin{align}\label{eq:step1}
&\|\vartheta^{(c)}(1_{E^{(c)}})\cdot (d_b+ze_b)-\vartheta^{(c)}(x_b^{(c)})\|= \bigg\|\sum_{t\in T_c}g_t\cdot (d_b-d_b(y_t)1_A)+\\
&\sum_{n=1}^N\sum_{s\in S_c^{(n)}}\bigg((\tilde{h}_s^{(n)}\otimes 1_{r_n})\cdot (d_b+ze_b) - \sum_{Q\in Q_s^{(n)}}\tilde{h}_s^{(n)}\otimes p_Qm_b^{(n)}(y_s^{(n)})p_Q\bigg)\bigg\|.\nonumber
\end{align}
Using \ref{item:delta-b} together with \ref{item:lemmaPOW-1} of \Cref{prop:POU}, we note that
\begin{align}\label{eq:step2}
\bigg\|\sum_{t\in T_c}(g_t\cdot d_b-d_b(y_t)g_t)\bigg\|&=\max_{t\in T_c}\sup_{y\in \supp(g_t)}|d_b(y)g_t(y)-d_b(y_t)g_t(y)|\\
&\le\max_{t\in T_c}\sup_{y\in \supp(g_t)}|d_b(y)-d_b(y_t)|\nonumber\\
&<\eta.\nonumber
\end{align}
Next, by orthogonality, we have that
\begin{align}\label{eq:step3}
&\bigg\|\sum_{n=1}^N\sum_{s\in S_c^{(n)}}\bigg((\tilde{h}_s^{(n)}\otimes 1_{r_n})\cdot (d_b+ze_b) - \sum_{Q\in Q_s^{(n)}}\tilde{h}_s^{(n)}\otimes p_Qm_b^{(n)}(y_s^{(n)})p_Q\bigg)\bigg\|=\\
&\max_{1\le n\le N}\max_{s\in S_c^{(n)}}\bigg\|(\tilde{h}_s^{(n)}\otimes 1_{r_n})\cdot (d_b+ze_b) - \sum_{Q\in Q_s^{(n)}}\tilde{h}_s^{(n)}\otimes p_Qm_b^{(n)}(y_s^{(n)})p_Q\bigg\|.\nonumber
\end{align}
Fix $1\le n\le N$ and $s\in S_c^{(n)}$. Inspecting \eqref{eq:mb}, we observe that the sum $\sum_{Q\in Q_s^{(n)}}\tilde{h}_s^{(n)}\otimes p_Qm_b^{(n)}(y_s^{(n)})p_Q$ can be written as
\begin{align*}
\tilde{h}_s^{(n)}\otimes\mathrm{diag}(d_b(y_s^{(n)},1),\dots,d_b(y_s^{(n)},r_n)) + \tilde{h}_s^{(n)}\otimes \sum_{Q\in Q_s^{(n)}} p_Q[b_{ij}^{(n)}(y_s^{(n)})]_{i,j=1}^{r_n}p_Q.
\end{align*}
Now by our remark after \eqref{eq:embeddings}, $(\tilde{h}_s^{(n)}\otimes 1_{r_n})\cdot d_b$ is identified with the diagonal matrix-valued function $\mathrm{diag}(\tilde{h}_s^{(n)}\cdot (d_b\circ\iota_1^{(n)}),\dots,\tilde{h}_s^{(n)}\cdot (d_b\circ\iota_{r_n}^{(n)}))$, and so
\begin{align}\label{eq:step4}
&\|(\tilde{h}_s^{(n)}\otimes 1_{r_n})\cdot d_b - \tilde{h}_s^{(n)}\otimes \mathrm{diag}(d_b(y_s^{(n)},1),\dots,d_b(y_s^{(n)},r_n))\|=\\
&\max_{1\le r\le r_n}\sup_{y\in \supp(\tilde{h}_s^{(n)})}|(d_b(y,r)-d_b(y_s^{(n)},r))\cdot \tilde{h}_s^{(n)}(y)|<\eta\nonumber
\end{align}
by \ref{item:lemmaPOW-2} of \Cref{prop:POU} together with \ref{item:delta-c}. Next, note that
\begin{align}\label{eq:step5}
&\bigg\|(\tilde{h}_s^{(n)}\otimes 1_{r_n})\cdot ze_b - \tilde{h}_s^{(n)}\otimes\sum_{Q\in Q_s^{(n)}} p_Q[b_{ij}^{(n)}(y_s^{(n)})]_{i,j=1}^{r_n}p_Q\bigg\| = \\
\sup_{y\in O^{(n)}}&\bigg\|[\tilde{h}_s^{(n)}(y)b_{ij}^{(n)}(y)]_{i,j=1}^{r_n}-\sum_{Q\in Q_s^{(n)}}p_Q[\tilde{h}_s^{(n)}(y)b_{ij}^{(n)}(y_s^{(n)})]_{i,j=1}^{r_n}p_Q\bigg\|_{M_{r_n}}.\nonumber
\end{align}
We claim that $[\tilde{h}_s^{(n)}(y)b_{ij}^{(n)}(y)]_{i,j=1}^{r_n}=\sum_{Q\in Q_s^{(n)}}p_Q[\tilde{h}_s^{(n)}(y)b_{ij}^{(n)}(y)]_{i,j=1}^{r_n}p_Q$ for all $y\in O^{(n)}$. To see this it suffices to observe that whenever the $(i,j)$-entry of the matrix on the left side of the equation is non-zero, then $i\sim_s^{(n)}j$. This is indeed the case, since if $\tilde{h}_s^{(n)}(y)b_{ij}^{(n)}(y)\ne0$, then $y\in\supp(\tilde{h}_s^{(n)})\cap K_{ij}^{(n)}$, and as $\supp(\tilde{h}_s^{(n)})$ is a subset of $O^{(n)}$ that has diameter less than $\delta$, by \ref{item:delta-a} this implies that $\supp(\tilde{h}_s^{(n)})\subset V_{ij}^{(n)}$ and thus $i\sim_s^{(n)}j$. Together with \eqref{eq:step5}, we obtain
\begin{align}\label{eq:step6}
&\bigg\|(\tilde{h}_s^{(n)}\otimes 1_{r_n})\cdot ze_b - \tilde{h}_s^{(n)}\otimes\sum_{Q\in Q_s^{(n)}} p_Q[b_{ij}^{(n)}(y_s^{(n)})]_{i,j=1}^{r_n}p_Q\bigg\|=\\
&\sup_{y\in O^{(n)}}\max_{Q\in Q_s^{(n)}}\big\|p_Q\big[\tilde{h}_s^{(n)}(y)\cdot \big(b_{ij}^{(n)}(y)-b_{ij}^{(n)}(y_s^{(n)})\big)\big]_{i,j=1}^{r_n}p_Q\big\|\le \nonumber\\
&r_n^2\cdot \sup_{y\in \supp(\tilde{h}_s^{(n)})}\max_{Q\in Q_s^{(n)}}\max_{1\le i,j\le r_n}|b_{ij}^{(n)}(y)-b_{ij}^{(n)}(y_s^{(n)})|<r_n^2\cdot \eta,\nonumber
\end{align}
where we used \ref{item:delta-d} and that $\supp(\tilde{h}_s^{(n)})$ is a subset of $O^{(n)}$ with diameter less than $\delta$. Putting everything together and applying the triangle inequality, we obtain
\begin{equation}\label{eq:final}
\|\vartheta^{(c)}(x_{1_A}^{(c)})\cdot b-\vartheta^{(c)}(x_b^{(c)})\|<(\max_{1\le n\le N}r_n^2+5)\cdot \eta<\varepsilon/(d+1)
\end{equation}
for all $b\in\mc{F}$ and $0\le c\le d$, as we wanted. This completes the proof for the unital case.

Assume now that $A$ is non-unital, and let $D\subset B\subset \mathrm{C}^\ast(D,\varphi(F))$ be an intermediate sub-$\mathrm{C}^\ast$-algebra. By the unital case (cf.\ the first lines of the last paragraph in the proof of \Cref{prop:cond-exps}), we get that $\dim_\mathrm{diag}(D^\sim\subset B^\sim)=\dim\widehat{D^\sim}$. By \cite[Theorem~3.1~(iv)]{LiLiaWin23} and \cite[Remark~2.11]{WinZac10} it follows that $\dim_\mathrm{diag}(D\subset B)=\dim\widehat{D}$ as we wanted.
\end{proof}

Combining \Cref{thm:subhom} together with \Cref{rmk:subhomogeneity} yields \Cref{thmi:C}.
\section{Finite diagonal dimension passes to intermediate subalgebras}

\noindent We have now gathered all the necessary ingredients to prove \Cref{thmi:A}.

\begin{theorem}\label{thm:main}
Let $(D\subset A)$ be a $\mathrm{C}^\ast$-diagonal with $D$ separable. If $D\subset B\subset A$ is an intermediate sub-$\mathrm{C}^\ast$-algebra, then
\begin{equation}\label{eq:main}
\dim_{\mathrm{diag}}^{+1}(D\subset B)\le \dim^{+1}\widehat{D}\cdot\dim^{+1}_\mathrm{diag}(D\subset A).
\end{equation}
\end{theorem}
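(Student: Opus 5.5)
We may assume $d\coloneqq\dim_\mathrm{diag}(D\subset A)$ and $k\coloneqq\dim\widehat{D}$ are finite, since otherwise there is nothing to show. We may also assume $A$ is unital: for non-unital $A$ we pass to the minimal unitizations $(D^\sim\subset B^\sim\subset A^\sim)$ and use \cite[Theorem~3.1]{LiLiaWin23} as at the end of the proof of \Cref{thm:subhom}. The strategy follows Archbold--Kumjian. We approximate $b\in B$ by a decomposition of the form $\sum_{i=0}^d\Psi^{(i)}(\cdot)$, where each $\Psi^{(i)}$ lands in an intermediate algebra $B\cap\mathrm{C}^\ast(D,\varphi^{(i)}(F^{(i)}))$. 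Each such algebra has diagonal dimension $k$ relative to $D$ by \Cref{thm:subhom}. We then concatenate the resulting $(d+1)$ families of $(k+1)$-colored approximations, giving $(d+1)(k+1)$ colors in total.

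\textbf{Step 1: approximation in $A$.} Fix $\varepsilon>0$ and $\mathcal{F}\Subset B^1$, and take a diagonal-dimension-$d$ approximation $(F,\psi,\varphi)$ for $(D\subset A)$ on $\mathcal{F}\cup\{1_A\}$. Write $C^{(i)}\coloneqq\mathrm{C}^\ast(D,\varphi^{(i)}(F^{(i)}))$. The approximation gives
\[
b\approx\sum_i\varphi^{(i)}\psi^{(i)}(b),\qquad \varphi^{(i)}\psi^{(i)}(b)\in C^{(i)},
\]
but these summands need not lie in $B$. To correct this, I apply \Cref{prop:cond-exps} to each $\varphi^{(i)}$ with finite set $\{\varphi^{(i)}\psi^{(i)}(b):b\in\mathcal{F}\cup\{1_A\}\}$. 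This yields $\Psi^{(i)}\colon A\to C^{(i)}$ with $\Psi^{(i)}(B)\subset B$, $\Psi^{(i)}|_D=\mathrm{id}$ and $\Psi^{(i)}(\varphi^{(i)}\psi^{(i)}(b))\approx\varphi^{(i)}\psi^{(i)}(b)$.

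The intended element is $b_i\coloneqq\Psi^{(i)}(\varphi^{(i)}\psi^{(i)}(b))$, but this is not in $B$. It is better to use an element like $\Psi^{(i)}(a_i b)$ with $a_i\coloneqq\varphi^{(i)}\psi^{(i)}(1_A)\in D$, which is legitimate because $\psi(D)\subset D_F$ and normalizer-preservation give $a_i\in D$. The quantities $a_ib$ lie in $B$ since $D\subset B$, and $\sum_ia_i\approx1$. Order-zero approximate multiplicativity, in the style of the criterion \cite[Proposition~2.3]{LiLiaWin23} (compare \eqref{eq:wanted2-summand-approx}), should give
\[
a_ib\approx\varphi^{(i)}\psi^{(i)}(b),
\]
at least after passing to a suitable upgraded approximation. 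Hence $\Psi^{(i)}(a_ib)\in B\cap C^{(i)}$ and $\sum_i\Psi^{(i)}(a_ib)\approx b$.

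\textbf{Step 2: dimension count.} \Cref{thm:subhom} gives $\dim_\mathrm{diag}(D\subset B\cap C^{(i)})\le k$. Approximating the finitely many elements $\Psi^{(i)}(a_ib)$ produces c.p.c.\ maps $\psi^{(i,j)}$ and normalizer-preserving order-zero maps $\varphi^{(i,j)}$ into $B$, for $j=0,\dots,k$. Normalizers of $D$ in $B\cap C^{(i)}$ are normalizers in $B$. The downward maps must be defined on all of $B$, so I take $\psi^{(i,j)}\circ\Psi^{(i)}(a_i\,\cdot\,)$, that is, compress by $a_i^{1/2}$ on both sides, $\Psi^{(i)}(a_i^{1/2}\cdot a_i^{1/2})$. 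This is c.p.\ and maps $D$ into $D_F$ because $\Psi^{(i)}|_D=\mathrm{id}$.

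The obstacle is contractivity of the total $\psi$. Each piece is contractive, but we need the direct sum over $i$ to be jointly contractive. I would address this by verifying the criterion \cite[Proposition~2.3]{LiLiaWin23} instead of Definition~\ref{def:dimdiag} directly. That criterion requires contractions $x_b$ and the compatibility \eqref{eq:wanted2-summand-approx}-type estimate per color, with $\sum_i a_i\approx 1$ distributing the weight. Alternatively, I would work with the Kirchberg--Winter-type formulation where only the upward maps need order zero with sum controlled by $\varphi(\psi(1))\approx1$.

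\textbf{Main obstacle.} The hardest step is making $a_ib\approx\varphi^{(i)}\psi^{(i)}(b)$ rigorous while keeping all errors uniformly small across the two nested approximations. First I would try to choose the inner approximations only after fixing the outer one, and to tie $\varepsilon$ for \Cref{thm:subhom} to $(d+1)^{-1}$ times the outer error.
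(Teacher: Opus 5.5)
This is essentially the paper's proof. Your $a_i$ is the paper's $d_i=\varphi^{(i)}(x^{(i)}_{\bar h})\hat h\in D$, and the estimate $a_ib\approx\varphi^{(i)}(x^{(i)}_b)$ that you flag as the main obstacle is exactly what \cite[Proposition~2.3]{LiLiaWin23} supplies (with contractions $x^{(i)}_b$ in place of $\psi^{(i)}(b)$). The elements $\Psi^{(i)}(a_ib)\in B\cap C^{(i)}$ are then fed into \Cref{thm:subhom} as you describe; the paper handles the non-unital case with a local unit $h\in D_+^1$ rather than by unitizing.

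The paper combines the two levels by verifying the criterion for $(D\subset B)$, as you propose. It adds $a_i$ to the inner finite sets and uses the inner approximations of $a_i$ (chosen in the masa) as the unit contractions. Your contractivity worry is moot, since a direct sum of c.p.c.\ maps is c.p.c. The direct-sum route with the compressions $\Psi^{(i)}(a_i^{1/2}\,\cdot\,a_i^{1/2})$ would, however, additionally need $\|[a_i,b]\|\approx 0$.
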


\begin{proof}
If the pair $(D\subset A)$ has infinite diagonal dimension there is nothing to show, so let us assume that $\dim_\mathrm{diag}(D\subset A)\eqqcolon d\in\mathbb{N}$, and thus also $\widehat{D}$ has finite covering dimension. Let $\varepsilon>0$, let $\mc{F}\Subset B^1$, and let $h\in D_+^1$ be such that $hb=bh=b$ for all $b\in\mc{F}$.  Let $0<\eta< (5d+6)^{-1}\varepsilon$. Let $\hat{\phi},\bar{\phi}\in C_0((0,1])_+^1$ be such that $\|\hat{\phi}-\mathrm{id}_{(0,1]}\|<\eta$, $\hat{\phi}(1)=\bar{\phi}(1)=1$ and $\bar{\phi}\hat{\phi}=\hat{\phi}$. Set $\hat{h}\coloneqq \hat{\phi}(h)$, $\bar{h}\coloneqq\bar{\phi}(h)\in \mathrm{C}^\ast(h)_+^1$ and note that $\|\hat{h}-h\|<\eta$, $\bar{h}\hat{h} = \hat{h}$, and $\bar{h}b=b\bar{h}=b$ and $\hat{h}b=b\hat{h}=b$ for all $b\in\mc{F}$.

By \cite[Proposition~2.3]{LiLiaWin23} we can find c.p.c.\ order zero maps $\varphi^{(i)}\colon F^{(i)}\to A$, $i=0,\dots,d$, with each $F^{(i)}$ finite-dimensional, together with masas $D_{F^{(i)}}\subset F^{(i)}$ such that $\varphi^{(i)}(\mc{N}_{F^{(i)}}(D_{F^{(i)}}))\subset\mc{N}_A(D)$ for all $i=0,\dots,d$, and such that there exist elements $x_b=x_b^{(0)}\oplus\dots\oplus x_b^{(d)}\in (\bigoplus_{i=0}^dF^{(i)})^1$ for $b\in\mc{F}\cup\{\hat{h}\}$ and $x_{\bar{h}}=x_{\bar{h}}^{(0)}\oplus\dots\oplus x_{\bar{h}}^{(d)} \in (\bigoplus_{i=0}^dD_{F^{(i)}})^1$ such that
\begin{equation}\label{eq:mainproof1}
\bigg\|b-\sum_{i=0}^d\varphi^{(i)}(x_b^{(i)})\bigg\|<\eta, \quad b\in \mc{F}\cup\{\hat{h},\bar{h}\},
\end{equation}
and
\begin{equation}\label{eq:mainproof2}
\|\varphi^{(i)}(x_{\bar{h}}^{(i)})\cdot b-\varphi^{(i)}(x_b^{(i)})\|<\eta,\quad b\in\mc{F}\cup\{\hat{h}\},\; i=0,\dots,d.
\end{equation}
For $0\le i\le d$ set $C_i\coloneqq \mathrm{C}^\ast(D,\varphi^{(i)}(F^{(i)}))$ and consider the set $\mathcal{F}_i\coloneqq \{\varphi^{(i)}(x_b^{(i)}): b\in\mc{F}\}\Subset C_i$. Apply \Cref{prop:cond-exps} for each $C_i$ with the above $\eta>0$ and the finite set $\mc{F}_i$ and denote the maps we obtain from that by $\Psi_i\colon A\to C_i$. For each $0\le i\le d$ set  $d_i\coloneqq \varphi^{(i)}(x_{\bar{h}}^{(i)})\cdot \hat{h}\in D^1$,  and note that $d_ib = \varphi^{(i)}(x_{\bar{h}}^{(i)}) b$, so \ref{item:condexp3} of \Cref{prop:cond-exps} and a triangle inequality yield that
\begin{equation}\label{eq:mainproof3}
\|\Psi_i(d_ib)-d_ib\|<3\eta,\quad b\in\mc{F},\; i=0,\dots, d.
\end{equation}
Moreover, $\Psi_i(d_ib)\in B\cap C_i$ for all $0\le i\le d$ and $b\in\mc{F}$ by \ref{item:condexp1} of \Cref{prop:cond-exps}. Also, by a multiplicative domain argument, we have
\begin{equation}
\bar{h}\cdot \Psi_i(d_ib) = \Psi_i(\bar{h}d_ib) = \Psi_i(d_ib) = \Psi_i(d_ib\bar{h})=\Psi_i(d_ib)\cdot\bar{h}
\end{equation}
and also $\bar{h}d_i=d_i\bar{h}=d_i$. Applying \Cref{thm:subhom}, for each $i=0,\dots,d$ we have that 
\begin{equation*}
\dim_\mathrm{diag}(D\subset B\cap C_i) = \dim\widehat{D}\eqqcolon k\in\bb{N}.
\end{equation*} 
Setting $\tilde{\mathcal{F}}_i\coloneqq\{\Psi_i(d_ib):b\in \mc{F}\} \Subset (B\cap C_i)^1$, we apply \cite[Proposition~2.3]{LiLiaWin23} for each pair $(D\subset B\cap C_i)$ with our $\eta>0$, the finite set $\tilde{\mc{F}}_i\cup\{d_i\}$  and $\bar{h}\in D_+^1$ as a unit for $\tilde{\mc{F}}_i\cup\{d_i\}$. In this way, for each $i=0,\dots,d$, we obtain finite-dimensional $\mathrm{C}^\ast$-algebras $\bigoplus_{j=0}^k\tilde{F}^{(i,j)}$ together with masas $\bigoplus_{j=0}^kD_{\tilde{F}^{(i,j)}}\subset\bigoplus_{j=0}^k\tilde{F}^{(i,j)}$ and c.p.c.\ order zero maps $\tilde{\varphi}^{(i,j)}\colon\tilde{F}^{(i,j)}\to B\cap C_i$, each of which maps $\mc{N}_{\tilde{F}^{(i,j)}}(D_{\tilde{F}^{(i,j)}})$ inside $\mc{N}_A(D)$, and elements $y_a^{(i)}=\bigoplus_{j=0}^ky_a^{(i,j)}\in(\bigoplus_{j=0}^k\tilde{F}^{(i,j)})^1$ for $a\in\tilde{\mc{F}}_i$ and $y_{d_i}^{(i)} = \bigoplus_{j=0}^ky_{d_i}^{(i,j)}\in (\bigoplus_{j=0}^kD_{\tilde{F}^{(i,j)}})^1$ and $y_{\bar{h}}^{(i)}=\bigoplus_{j=0}^ky_{\bar{h}}^{(i,j)}\in (\bigoplus_{j=0}^kD_{\tilde{F}^{(i,j)}})^1$ such that\footnote{There is a subtlety here: a direct application of \cite[Proposition~2.3]{LiLiaWin23} as stated therein would not necessarily give a contraction $y_{d_i}^{(i)}$ \emph{in the finite-dimensional masa}. Nevertheless, an inspection of the proof of the direction of \cite[Proposition~2.3]{LiLiaWin23} that we are using here shows that we can take $y_{d_i}^{(i)}$ in $\bigoplus_{j=0}^k D_{\tilde{F}^{(i,j)}}$ without harming the generality.}
\begin{equation}\label{eq:mainproof7}
\bigg\|a-\sum_{j=0}^k\tilde{\varphi}^{(i,j)}(y_{a}^{(i,j)})\bigg\|<\eta,\quad a\in\tilde{\mc{F}}_i\cup\{d_i,\bar{h}\},\; 0\le i\le d
\end{equation}
and
\begin{equation}\label{eq:mainproof8}
\|\tilde{\varphi}^{(i,j)}(y_{\bar{h}}^{(i,j)})\cdot a - \tilde{\varphi}^{(i,j)}(y_a^{(i,j)})\|<\eta,\quad a\in\tilde{\mc{F}}_i\cup\{d_i\},\; 0\le j\le k,\; 0\le i\le d.
\end{equation}

For each $0\le i\le d$ and $0\le j\le k$ and for each $b\in\mc{F}$ set $y_b^{(i,j)}\coloneqq y_{\Psi_i(d_ib)}^{(i,j)}$, and  set $y_h^{(i,j)} \coloneqq y_{d_i}^{(i,j)}$. For $b\in\mc{F}$ we have that
\begin{align*}
\bigg\|\sum_{i=0}^d\sum_{j=0}^k\tilde{\varphi}^{(i,j)}(y_b^{(i,j)}) - b\bigg\| &\overset{(\ref{eq:mainproof1})}{<} \bigg\|\sum_{i=0}^d\sum_{j=0}^k\tilde{\varphi}^{(i,j)}(y_b^{(i,j)}) - \sum_{i=0}^d\varphi^{(i)}(x_b^{(i)})\bigg\|+\eta\\
&\overset{\;\;\;\;\;\;\;}{\le} \sum_{i=0}^d\bigg\|\sum_{j=0}^k\tilde{\varphi}^{(i,j)}(y_b^{(i,j)}) - \varphi^{(i)}(x_b^{(i)})\bigg\| +\eta\\
&\overset{\eqref{eq:mainproof7}}{<} \sum_{i=0}^d\|\Psi_i(d_ib) - \varphi^{(i)}(x_b^{(i)})\| + (d+2)\eta\\
&\overset{\eqref{eq:mainproof3}}{<} \sum_{i=0}^d\|d_ib-\varphi^{(i)}(x_b^{(i)})\|+(4d+5)\eta\\
&\overset{\eqref{eq:mainproof2}}{<}(5d+6)\eta.
\end{align*}
Also,
\begin{align*}
\bigg\|\sum_{i=0}^d\sum_{j=0}^k\tilde{\varphi}^{(i,j)}(y_h^{(i,j)})-h\bigg\|&\overset{\;\;\;\;\;\;\;}{<} \bigg\|\sum_{i=0}^d\sum_{j=0}^k\tilde{\varphi}^{(i,j)}(y_{h}^{(i,j)}) - \hat{h} \bigg\|+ \eta\\
&\overset{(\ref{eq:mainproof1})}{<}  \bigg\|\sum_{i=0}^d\sum_{j=0}^k\tilde{\varphi}^{(i,j)}(y_{h}^{(i,j)}) - \sum_{i=0}^d \varphi^{(i)}(x_{\hat{h}}^{(i)})\bigg\| +2\eta \\
&\overset{\eqref{eq:mainproof7}}{<}\bigg\|\sum_{i=0}^d(d_i-\varphi^{(i)}(x_{\hat{h}}^{(i)}))\bigg\|+(d+3)\eta\\
&\overset{\;\;\;\;\;\;\;}{\le}\sum_{i=0}^d\|\varphi^{(i)}(x_{\bar{h}}^{(i)})\cdot \hat{h} - \varphi^{(i)}(x_{\hat{h}}^{(i)})\| +(d+3)\eta\\
&\overset{\eqref{eq:mainproof2}}{<}  (2d+4)\eta.
\end{align*}
Finally, for $b\in\mc{F}$ we have that
\begin{align*}
\|\tilde{\varphi}^{(i,j)}(y_h^{(i,j)}) b - \tilde{\varphi}^{(i,j)}(y_b^{(i,j)})\| &\overset{\eqref{eq:mainproof8}}{<}  \|\tilde{\varphi}^{(i,j)}(y_h^{(i,j)}) b  - \tilde{\varphi}^{(i,j)}(y_{\bar{h}}^{(i,j)}) \Psi_i(d_ib)\| +\eta \\
&\overset{\eqref{eq:mainproof8}}{<} \|\tilde{\varphi}^{(i,j)}(y_{\bar{h}}^{(i,j)})\cdot (d_ib-\Psi_i(d_ib))\| + 2\eta\\
&\overset{\eqref{eq:mainproof3}}{<} 5\eta.
\end{align*}
These estimates together with the fact that all the maps $\tilde{\varphi}^{(i,j)}$ take values in $B$ yield
\begin{equation*}
\dim_\mathrm{diag}(D\subset B)\le (\dim\widehat{D}+1) \cdot (d+1)-1
\end{equation*}
as we wanted.
\end{proof}

\begin{remark}\label{rmk:optimal}
It is not clear at the moment whether the upper bound of \eqref{eq:main} can be improved by removing the factor $\dim^{+1}\widehat{D}$. More precisely, we have no example of a $\mathrm{C}^\ast$-diagonal $(D\subset A)$ with spectrum of covering dimension at least one (since for zero-dimensional spectra this discussion is vacuous) and an intermediate sub-$\mathrm{C}^\ast$-algebra $B$ such that the diagonal dimension of $(D\subset B)$ is greater than that of $(D\subset A)$. 

The diagonal dimension is strongly related to the dynamic asymptotic dimension of the underlying Weyl groupoid (in the sense of \cite[Definition~4.2]{Ren08}), but the precise relation remains unclear (cf.\ \cite[Remarks~6.8 and 6.9, Question~6.11]{LiLiaWin23}). Since by \cite{BrownExelFullerPittsReznikoff21, BrownExelFullerPittsReznikoff21corrigendum} an intermediate sub-$\mathrm{C}^\ast$-algebra $B$ in our case is the (possibly twisted) groupoid $\mathrm{C}^\ast$-algebra of a wide,\footnote{A subgroupoid is called \emph{wide} when it has the same unit space as the ambient groupoid.} open subgroupoid of the Weyl groupoid of $(D\subset A)$, and since dynamic asymptotic dimension seems to be monotone with respect to wide open subgroupoids \cite{Bonicke24}, obtaining a precise relation between the diagonal dimension of a pair and the dynamic asymptotic dimension of the underlying twisted groupoid could possibly shed more light on this. To the best of my knowledge, this topic will be further explored in upcoming work of C.\ B{\"o}nicke and K.\ Li.
\end{remark}

We finish this paper by establishing \Cref{cori:B} from \Cref{thmi:A}.

\begin{proof}[Proof of Corollary~B]
Assume that $(D\subset A)$ is a $\mathrm{C}^\ast$-diagonal with $A$ unital and separable, such that $\dim_\mathrm{diag}(D\subset A)<\infty$. If $D\subset B\subset A$ is an intermediate sub-$\mathrm{C}^\ast$-algebra, by \Cref{thm:main} together with \cite[Remarks~2.2~(i)]{LiLiaWin23} we have that $B$ has finite nuclear dimension. Since $1_A\in D\subset B$, $B$ is also unital and separable. By \cite{BarLi17} it also satisfies the UCT, so, whenever it is moreover simple, it is classifiable.
\end{proof}

\end{document}